\newtheorem{theorem}{Theorem}[section]
\newtheorem{lemma}[theorem]{Lemma}
\newtheorem{proposition}[theorem]{Proposition}
\newtheorem{corollary}[theorem]{Corollary}
\newtheorem{definition}[theorem]{Definition}
\newtheorem{hypothesis}[theorem]{Hypothesis}
\newcommand{\R}{{\bf R}}
\newcommand{\C}{{\bf C}}
\newcommand{\cA}{{\cal A}}
\newcommand{\cB}{{\cal B}}
\newcommand{\cE}{{\cal E}}
\newcommand{\cH}{{\cal H}}
\newcommand{\cL}{{\cal L}}
\newcommand{\charac}[3]{\Psi_{ {}_{\small[{#1},{#2}\small], {#3} } } }
\newcommand{\tcharac}[3]{\widetilde{\Psi}_{ {}_{\small[{#1},{#2}\small], {#3} } } }
\newcommand{\dcharac}[4]{\Psi^{#4}_{ {}_{\small[{#1},{#2}\small], {#3} } } }
\newcommand{\lam}{\lambda}
\newcommand{\cint}{\iint\limits_{\C}}
\newcommand{\rint}{\int\limits_{-\infty}^{\infty}}
\newcommand{\bra}[1]{\left( #1 \right)}
\newcommand{\sbra}[1]{\small( #1 \small)}
\newcommand{\norm}[1]{\Vert #1 \Vert}
\newcommand{\beq}{\begin{equation}}
\newcommand{\kik}[1]{C^{\infty}_{c}\bra{#1}}
\newcommand{\kiko}[1]{C^{\infty}_{0}\bra{#1}}
\newcommand{\sx}{\langle x \rangle}
\newcommand{\algebra}{\cA}
\newcommand{\ealgebra}{\hat{\algebra}}
\begin{document}

\title{\bf The Spectral Mapping Theorem}
\author{Narinder S Claire}
\date{}

\maketitle

\begin{abstract}
We give a direct non-abstract proof of the Spectral Mapping Theorem for the Helffer-Sj\"ostrand functional calculus for linear operators on Banach spaces with real spectra 
and consequently give a new non-abstract direct proof for the Spectral Mapping Theorem for self-adjoint operators on Hilbert spaces. Our exposition is closer
in spirit to the proof by explicit construction of the existence of the Functional Calculus given by Davies. We apply an extension theorem of Seeley to derive a functional
calculus for semi-bounded operators. 
\\ \\
{\bf AMS Subject Classification :  47A60\\
Keywords : Functional Calculus, Spectral Mapping Theorem, Spectrum, }  
\end{abstract}

\section{Introduction}
The Helffer-Sj\"ostrand formula was established in \cite{hs} in the following proposition
\begin{proposition}[\cite{hs} Proposition 7.2]\label{hsformula}
 Let $H$ be a self-adjoint operator (not necessarily bounded) on a Hilbert space $\cH$. Suppose $f$ is in $C^\infty_0\sbra{\R}$ and $\tilde{f}$ in 
$C^\infty_0\sbra{\C}$ is an extension of $f$ such that $\frac{\partial\tilde{f}}{\partial\bar{z}}=0$ on $\R$. Then we have
\begin{equation}\label{e:hs}
f\bra{H} = \frac{i}{2\pi}\cint \frac{\partial \tilde{f}}{\partial \overline{z}}
\bra{z-H}^{-1}d\bar{z}\wedge dz =-\frac{1}{\pi}\cint \frac{\partial \tilde{f}}{\partial \overline{z}}
\bra{z-H}^{-1}dxdy
\end{equation}
where $L\sbra{C}$ is the Lebesgue measure on $C$.
\end{proposition}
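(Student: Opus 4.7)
The plan is to reduce the operator identity to a scalar one via the spectral theorem and to derive that scalar identity from the Cauchy--Pompeiu formula, itself a consequence of Stokes' theorem. Before either reduction, I would verify that the right-hand side of \eqref{e:hs} is well defined. Since $\partial_{\bar z}\tilde f$ is smooth, compactly supported, and vanishes on $\R$, a first-order Taylor expansion perpendicular to the real axis gives $|\partial_{\bar z}\tilde f(z)|\le C|\Im z|$ on $\supp \tilde f$. Combined with the self-adjoint resolvent bound $\|(z-H)^{-1}\|\le|\Im z|^{-1}$ for $\Im z\ne 0$, the integrand is majorised in operator norm by a bounded, compactly supported function, so the Bochner integral converges.

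Next I would establish the scalar analogue of \eqref{e:hs}:
\begin{equation}\label{p:scalar}
f(\lambda) \;=\; \frac{i}{2\pi}\iint_\C \frac{\partial_{\bar z}\tilde f(z)}{z-\lambda}\,d\bar z\wedge dz \qquad (\lambda\in\R).
\end{equation}
Fix $\lambda$ and pick $R$ so large that $\supp\tilde f\subset\{|z|<R\}$; for $\varepsilon>0$ let $\Omega_\varepsilon$ denote the disk of radius $R$ with the open disk $\{|z-\lambda|<\varepsilon\}$ removed. On $\Omega_\varepsilon$ the factor $(z-\lambda)^{-1}$ is holomorphic, so Stokes' theorem applied to the $1$-form $\tilde f(z)(z-\lambda)^{-1}\,dz$ gives
\[
\int_{\partial\Omega_\varepsilon}\frac{\tilde f(z)}{z-\lambda}\,dz \;=\; \iint_{\Omega_\varepsilon}\frac{\partial_{\bar z}\tilde f(z)}{z-\lambda}\,d\bar z\wedge dz.
\]
The outer boundary contributes nothing as $\tilde f$ is compactly supported, while the inner circle (traversed clockwise) contributes $-2\pi i f(\lambda)$ in the limit $\varepsilon\to 0$. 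Rearranging yields \eqref{p:scalar}.

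To promote \eqref{p:scalar} to the operator identity, pair both sides of \eqref{e:hs} against $u,v\in\cH$ and apply the spectral theorem to write $\la(z-H)^{-1}u,v\ra=\int_\R(z-\lambda)^{-1}\,d\mu_{u,v}(\lambda)$ for the finite complex spectral measure $\mu_{u,v}=\la E(\cdot)u,v\ra$, whose total variation is bounded by $\|u\|\|v\|$. The majorant established above provides a joint integrable dominator in $(\lambda,z)$, so Fubini permits exchanging the $z$- and $\lambda$-integrals; invoking \eqref{p:scalar} then collapses the inner $z$-integral to $f(\lambda)$ and recovers $\la f(H)u,v\ra$.

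I expect the Fubini step to be the main obstacle: the almost-analyticity hypothesis provides only the minimal decay $O(|\Im z|)$ of $\partial_{\bar z}\tilde f$ near $\R$, which only just offsets the resolvent singularity $|\Im z|^{-1}$; richer extensions with $\partial_{\bar z}\tilde f$ vanishing to all orders on $\R$ would be notationally smoother but are not strictly necessary. As a sanity check, independence of the right-hand side of \eqref{e:hs} from the choice of extension follows by applying Stokes' theorem to the difference of any two such extensions.
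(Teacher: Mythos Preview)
The paper does not actually prove this proposition: it is quoted verbatim from Helffer--Sj\"ostrand \cite{hs} as background, and the text immediately following it remarks that ``the existence of the functional calculus was assumed by the authors.'' There is therefore no proof in the paper to compare against.

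That said, your argument is the standard one and is correct. The three ingredients --- the $O(|\Im z|)$ bound on $\partial_{\bar z}\tilde f$ from first-order vanishing on $\R$, the scalar Cauchy--Pompeiu identity via Stokes on a punctured disk, and the lift to operators by pairing with the spectral measure and applying Fubini --- are exactly how the formula is usually established (and how Helffer--Sj\"ostrand themselves do it). Your remark that the first-order decay ``only just offsets'' the resolvent singularity is accurate: the joint dominator is bounded and compactly supported, which suffices for Fubini. One small point: in the inner-circle limit you use $\tilde f(\lambda)=f(\lambda)$, which is given by the hypothesis that $\tilde f$ extends $f$; you might state this explicitly.
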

The existence of the functional calculus was assumed by the authors. Davies\cite{Da1}
showed that the formula (equation \ref{hsformula}) yielded a new approach to \underline{constructing} the functional calculus for 
linear operators on Banach spaces under the following hypothesis 
\begin{hypothesis}\label{e:hypothesis}
$H$ is a closed densely defined operator on a Banach space $\cB$ with spectrum $\sigma\sbra{H}\subseteq \R$. The resolvent operators
$\sbra{z-H}^{-1}$ are defined and bounded for all $z\notin\R$ and 
\begin{equation}
\norm{\sbra{z-H}^{-1}}\leq c |Im\,z|^{-1}\bra{\frac{\langle z \rangle}{|Im\,z|}}^\alpha
\end{equation}
for some $\alpha\geq 0$ and all $z\notin\R$, where $\langle z \rangle := \sbra{1+|z|^2}^{\frac{1}{2}}$.
\end{hypothesis}
His functional calculus, for operators on Banach spaces, was defined for an algebra of slow decreasing smooth functions. Davies\cite{Da1} pointed out that a functional calculus based upon almost analytic extensions was also constructed by Dyn'kin\cite{dk}. However, the two 
approaches were quite different and that Davies' approach was more appropriate for differential operators.\\
The Spectral Mapping theorem for the Helffer-Sj\"ostrand functional calculus was also independently proved by B\'atkai and Fa\v{s}anga \cite{Ba}. They
applied methods from abstract functional analysis and their primary tool was an existing abstract Spectral Mapping Theorem from the theory of Banach algebras  :
\begin{theorem}[\cite{Ba} Theorem 4.1]
Let $\cB_1$ be a commutative, semisimple, regular Banach algebra, $\cB_2$ be a Banach algebra with unit,
$\Theta : \cB_1 \rightarrow   \cB_2$ be a continuous algebra homomorphism and $a \in \cB_1$. Then 
$$ \sigma_{\cB_1}\bra{\Theta\bra{a}} = \overline{\hat{a}\bra{Sp\bra{\theta}}} \quad \text{ where } Sp\bra{\Theta}:= \cap_{b\in Ker \Theta}\,Ker\, \hat{b} $$ 
and $\hat{}$ denotes the Gelfand transform.
\end{theorem}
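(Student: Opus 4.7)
The plan is to prove the two inclusions separately, after first reducing to the quotient algebra $\cA := \cB_1/\Ker\Theta$ and reinterpreting $Sp\bra{\Theta}$ as the hull of $\Ker\Theta$ inside the maximal ideal space $\Delta\bra{\cB_1}$.

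First I would unpack the definition: $Sp\bra{\Theta} = \cap_{b \in \Ker\Theta} \Ker\hat b$ is precisely the set of characters $\phi\in\Delta\bra{\cB_1}$ that annihilate $\Ker\Theta$, i.e.\ the hull $h\bra{\Ker\Theta}$. Such $\phi$ are exactly those descending through the quotient map $\pi:\cB_1\to\cA$ to characters $\tilde\phi\in\Delta\bra{\cA}$, giving a canonical homeomorphism $Sp\bra{\Theta}\cong\Delta\bra{\cA}$ under which $\hat a$ restricted to $Sp\bra{\Theta}$ corresponds to the Gelfand transform $\widehat{\pi\bra{a}}$ on $\Delta\bra{\cA}$. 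Factor $\Theta=\tilde\Theta\circ\pi$ so that $\tilde\Theta:\cA\to\cB_2$ is an injective continuous algebra homomorphism.

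For the inclusion $\sigma_{\cB_2}\bra{\Theta\bra{a}} \subseteq \overline{\hat a\bra{Sp\bra{\Theta}}}$: any algebra homomorphism preserves invertibility, so $\tilde\Theta\bra{\pi\bra{a}-\lambda}=\Theta\bra{a}-\lambda$ invertible in $\cB_2$ is forced whenever $\pi\bra{a}-\lambda$ is invertible in $\cA$, giving $\sigma_{\cB_2}\bra{\Theta\bra{a}}\subseteq\sigma_{\cA}\bra{\pi\bra{a}}$. Because $\cA$ is commutative and unital, Gelfand's theorem yields $\sigma_{\cA}\bra{\pi\bra{a}}=\widehat{\pi\bra{a}}\bra{\Delta\bra{\cA}}=\hat a\bra{Sp\bra{\Theta}}$, which is compact (hence closed) by compactness of $\Delta\bra{\cA}$. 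This direction uses only the commutativity and semisimplicity, not regularity.

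The reverse inclusion $\overline{\hat a\bra{Sp\bra{\Theta}}}\subseteq\sigma_{\cB_2}\bra{\Theta\bra{a}}$ is the substantive one and is where regularity of $\cB_1$ enters. I would argue by contradiction: fix $\phi\in Sp\bra{\Theta}$ with $\lambda:=\phi\bra{a}$ and suppose $\Theta\bra{a}-\lambda$ is invertible in $\cB_2$ with inverse $y$. By regularity, for every open neighbourhood $U$ of $\phi$ in $\Delta\bra{\cB_1}$ there exists $u\in\cB_1$ with $\hat u\bra{\phi}=1$ and $\hat u\equiv 0$ off $U$. The identity $\Theta\bra{u}=\Theta\bra{u}\cdot y\cdot\Theta\bra{a-\lambda}=y\cdot\Theta\bra{u\bra{a-\lambda}}$ would yield the desired contradiction $\hat u\bra{\phi}=\phi\bra{u}=0$ as soon as one can guarantee $u\bra{a-\lambda}\in\Ker\Theta$ (which would then force $u\in\Ker\Theta$ because $y$ is a left inverse).

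The principal obstacle is exactly this last step: upgrading the pointwise vanishing of $\hat a-\lambda$ at $\phi$ to genuine membership $u\bra{a-\lambda}\in\Ker\Theta$. A naive attempt to pick $u=u_n$ shrinking its Gelfand support so that $\Vert\widehat{u_n\bra{a-\lambda}}\Vert_\infty\to 0$ fails, since small sup-norm of the Gelfand transform does not entail small norm in $\cB_1$, nor membership in a closed ideal. I would expect to have to invoke a Tauberian-flavoured property that is available in regular semisimple algebras — for instance, that the closed ideal $\Ker\Theta$ contains every element whose Gelfand transform vanishes on a neighbourhood of $h\bra{\Ker\Theta}$ when the hull/kernel calculus behaves — or alternatively to deploy the Shilov idempotent theorem to split off a local piece of $\cB_1$ near $\phi$ on which $\Theta$ vanishes. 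Handling this localisation cleanly, and thereby closing the gap between the easy quotient argument and the regularity-powered reverse inclusion, is the heart of the proof.
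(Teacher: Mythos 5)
The first thing to say is that the paper contains no proof of this statement: it is quoted from B\'atkai and Fa\v{s}anga precisely as the abstract Banach-algebra tool that \emph{their} proof of the spectral mapping theorem rests on, and the stated purpose of the present paper is to avoid it in favour of a direct argument. There is therefore no proof in the paper to compare yours against, and I can only assess the proposal on its own terms.

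On those terms the outline is sensible (quotient by $\Ker\Theta$, identify $Sp\bra{\Theta}$ with the hull $h\bra{\Ker\Theta}$ and hence with $\Delta\bra{\cB_1/\Ker\Theta}$, prove two inclusions), but it contains one genuine error and one genuine gap. The error: you assert that $\hat a\bra{Sp\bra{\Theta}}$ is compact ``by compactness of $\Delta\bra{\cA}$''. The hypotheses do not make $\cB_1$ unital, and in the application motivating this theorem $\cB_1$ is the non-unital algebra of decaying functions; then the maximal ideal space is only locally compact, the Gelfand image need not be closed, and the closure bar in the statement is there precisely to absorb the missing limit point (the value $0$, i.e.\ the constant $\pi_{{}_{\C,\phi}}$ in the notation of the present paper). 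Your easy inclusion must therefore be run through the unitization, with $\sigma_{\cA}\bra{x}=\widehat{x}\bra{\Delta\bra{\cA}}\cup\{0\}$, and you must check that the induced map carries the adjoined unit to $1_{\cB_2}$ before ``homomorphisms preserve invertibility'' applies. The gap: for the reverse inclusion you correctly isolate the crux --- upgrading the vanishing of $\widehat{u\bra{a-\lambda}}$ at the single character $\phi$ to actual membership $u\bra{a-\lambda}\in\Ker\Theta$ --- but you then only gesture at ``a Tauberian-flavoured property'' or the Shilov idempotent theorem. The relevant fact for regular semisimple algebras is the hull--kernel inclusion $j\bra{h\bra{I}}\subseteq I$ for closed ideals $I$, which requires the transform of the element to vanish on a \emph{neighbourhood of all of} $h\bra{\Ker\Theta}$ with compact support; $\hat u\cdot\bra{\hat a-\lambda}$ vanishes only at $\phi$ and off $\supp\hat u$, so the contradiction $\hat u\bra{\phi}=0$ does not follow as written. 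Since you acknowledge you cannot close this step, the proposal is a plan rather than a proof, and the hard inclusion --- the only part where regularity and semisimplicity actually earn their keep --- remains unproved.
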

Our exposition, part of the Ph.D thesis referred to in the introduction of \cite{Ba}, takes a very non-abstract and direct approach to the proof.
In particular an existing spectral mapping is not assumed. Our sole ingredients, supplementing the tools provided in Davies\cite{Da1}, are the very elementary observations:
\begin{itemize}
\item It is possible to join two non-zero points in $\C$ smoothly without passing through the origin.
\item (\cite{Da3} Problem 8.1.11) If $H$ is a closed operator and $\lambda$ lies in the topolgical boundary of the spectrum of $H$ then
 for every $\epsilon>0$ there is a vector $v$ with length $1$ such that $\norm {Hv-\lambda v}<\epsilon$
\item Stokes Formula has similarities to the Cauchy Integral Forumla.
\end{itemize}
A compelling argument for a direct proof that does not rely on spectral mapping results from the theory of Banach algebras follows from the claim by Davies\cite{Da1} 
that all the calculations in the construction of his functional calculus can all be carried out at a Banach algebra level rather than at an operator level, provided one
has a resolvent family in the algebra satifying the obvious analogue of hypothesis \ref{e:hypothesis}.\\ \\
In the last part of our exposition we derive a functional calculus for operators with spectra bounded on one side. 
Our main tool here is an extension operator of Seeley :
$$ \cE : C^{\infty}[0,\infty) \longrightarrow C^{\infty}\bra{\R}$$

\subsection{Functional Calculus}
We summarize some of the main aspects of the Helffer-Sj\"ostrand functional calculus presented in Davies\cite{Da2} and some properties of the algebra. 
Let $\psi_{{}_{a,\epsilon}}$ be a smooth function such that
\begin{equation*}
\psi_{{}_{a,\epsilon}}\bra{x}: = \begin{cases} 1 & \text{ if } x\geq a\\ 0 & \text{ if } x\leq a-\epsilon  \end{cases}
\end{equation*}
Then given an interval $\left[a,b\right]$ we define the approximate characteristic function $\charac{a}{b}{\epsilon} $
$$\charac{a}{b}{\epsilon} \bra{x} = \psi_{{}_{a,\epsilon}}\bra{x}- \psi_{{}_{b+\epsilon,\epsilon}}\bra{x}$$
which has support $\left[a-\epsilon,b+\epsilon \right]$ and is equal to  $1$ in $\left[a,b\right]$ and is smooth.
\begin{definition}
For $\beta \in \R$ let
$ S^{\beta}$ to be the set of all complex-valued smooth functions defined on $\R$ where for every $n$
there is a positive constant $c_n$ such that
where $$|\frac{d^nf}{dx^n}|\, \leq\, c_n \sx^{\beta-n}$$
We then define the Algebra $\cA:= \bigcup\limits_{\beta<0} S^{\beta} $
\end{definition}
\begin{lemma}[Davies \cite{Da1,Da2}]
$\cA$ is an algebra under point-wise multiplication.
For any $f$ in $\cA$ the expression 
\begin{equation}
\norm{f}_n := \sum\limits_{r=0}^{n}\rint |\frac{d^{r}f}{dx^r}|\,\sx^{r-1} dx
\end{equation}
defines a norm on $\cA$ for each $n$.
Moreover $\kiko{\R}$ is dense in $\cA$ with this norm.
\end{lemma}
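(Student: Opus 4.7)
The plan has three clearly separated parts matching the three assertions in the lemma.

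For the algebra property, I would verify directly that $S^{\beta_1}\cdot S^{\beta_2}\subseteq S^{\beta_1+\beta_2}$. Given $f\in S^{\beta_1}$ and $g\in S^{\beta_2}$, applying the Leibniz rule to $(fg)^{(n)}$ and the defining estimates of $S^{\beta_i}$ term-by-term yields $|(fg)^{(n)}(x)|\leq C_n\sx^{\beta_1+\beta_2-n}$. Since both $\beta_i<0$ we have $\beta_1+\beta_2<0$, so $fg\in\cA$. (Closure under sums and scalar multiples is similar, using $S^{\beta_1}\cup S^{\beta_2}\subseteq S^{\max(\beta_1,\beta_2)}$.)

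For the norm property, the key point is finiteness: if $f\in S^\beta$ with $\beta<0$, then each summand in $\norm{f}_n$ is bounded above by $c_r\int \sx^{\beta-r}\sx^{r-1}\,dx=c_r\int\sx^{\beta-1}\,dx$, which converges since $\beta-1<-1$. Homogeneity and the triangle inequality are immediate from the corresponding properties of the integrand. For definiteness, the $r=0$ term $\int |f(x)|\sx^{-1}dx=0$ forces $f$ to vanish almost everywhere, and smoothness then forces $f\equiv 0$.

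For density, fix a cutoff $\chi\in\kiko{\R}$ with $\chi=1$ on $[-1,1]$ and $\supp\chi\subseteq[-2,2]$, and set $\chi_R(x):=\chi(x/R)$ so that $f_R:=\chi_R f\in\kiko{\R}$ for $f\in S^\beta\subseteq\cA$. I would bound $\norm{f-f_R}_n$ by splitting the Leibniz expansion of $\bra{(1-\chi_R)f}^{(r)}$ into the $k=0$ piece $(1-\chi_R)f^{(r)}$, which is supported in $|x|\geq R$ and whose weighted $L^1$ norm is controlled by $c_r\int_{|x|\geq R}\sx^{\beta-1}dx\to 0$, and the $k\geq 1$ pieces where $\chi_R^{(k)}$ is supported in $R\leq |x|\leq 2R$ and bounded by $C_k R^{-k}$, giving a contribution of order $R^{-k}\cdot R^{\beta+k-1}\cdot R=R^{\beta}\to 0$.

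The routine parts are the algebra check and the finiteness/norm-axiom verification; I expect the only mild obstacle to be the density step, where one must track the Leibniz expansion carefully to confirm that every term genuinely contributes a negative power of $R$ uniformly in $0\leq r\leq n$. Since the argument exploits only $\beta<0$, this goes through uniformly for any $f\in S^\beta\subseteq\cA$ and completes all three claims.
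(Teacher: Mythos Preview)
Your proof is correct. Note, however, that the paper does not actually prove this lemma: it is stated with attribution to Davies \cite{Da1,Da2} and no proof is given in the present paper, so there is no ``paper's own proof'' to compare against. Your argument is the standard one (and is essentially the argument Davies gives): the Leibniz rule for the algebra property, the integrability of $\sx^{\beta-1}$ for the finiteness of the norm, and a dilated cutoff $\chi(x/R)$ for density, with the Leibniz expansion of $(1-\chi_R)f$ split into the tail term ($k=0$) and the annulus-supported derivative terms ($k\geq 1$), each of which contributes $O(R^\beta)$ or better. The bookkeeping in your density step is accurate; in particular the $k\geq 1$ estimate $R^{-k}\cdot R^{\beta+k-1}\cdot R = R^{\beta}$ is exactly right and uniform over $0\leq r\leq n$.
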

\begin{lemma}
The function $\sx^{\beta}$ is in $\cA$ for each $\beta <0$
\end{lemma}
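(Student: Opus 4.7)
The plan is to show directly that $f(x) = \sx^{\beta} = (1+x^2)^{\beta/2}$ lies in $S^{\beta}$, which automatically places it in $\cA$ since $\beta < 0$. Smoothness is immediate because $1+x^2 \geq 1 > 0$ on $\R$, so we are raising a strictly positive smooth function to a real power. The work is entirely in verifying the derivative bounds $|f^{(n)}(x)| \leq c_n \sx^{\beta - n}$.

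First I would establish by induction on $n$ the structural identity
\begin{equation*}
\frac{d^n}{dx^n}\sx^{\beta} = P_n(x)\,\sx^{\beta - 2n},
\end{equation*}
where $P_n$ is a polynomial with $\deg P_n \leq n$. The base case $P_0 = 1$ is trivial. For the inductive step, differentiating and extracting the common factor $\sx^{\beta - 2(n+1)}$ yields the recursion
\begin{equation*}
P_{n+1}(x) = (1+x^2)P_n'(x) + (\beta - 2n)\,x\,P_n(x),
\end{equation*}
and both terms on the right have degree at most $n+1$ when $\deg P_n \leq n$, closing the induction.

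Next I would convert the polynomial bound $|P_n(x)| \leq c_n(1 + |x|^n) \leq c_n' \sx^n$ into the required estimate
\begin{equation*}
\left|\frac{d^n}{dx^n}\sx^{\beta}\right| \leq c_n'\,\sx^n \cdot \sx^{\beta - 2n} = c_n'\,\sx^{\beta - n}.
\end{equation*}
This gives $\sx^{\beta} \in S^{\beta}$, and since $\beta < 0$ we obtain $\sx^{\beta} \in \cA$ as claimed.

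There is no genuine obstacle here; the only point requiring care is the degree count in the polynomial recursion, which is really just routine bookkeeping. The proof is essentially an explicit Fa\`a di Bruno / chain rule computation, organized so that the growth of the derivatives is separated cleanly into a polynomial factor times a pure power of $\sx$.
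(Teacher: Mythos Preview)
Your proof is correct and follows essentially the same approach as the paper: both argue by induction that repeated differentiation of $\sx^{\beta}$ produces a finite combination of terms of the form $x^{k}\sx^{\beta-m}$ with $m\geq k$ plus enough extra decay to give the $\sx^{\beta-n}$ bound. Your packaging via a single polynomial factor $P_n(x)\sx^{\beta-2n}$ with $\deg P_n\le n$ is a slightly cleaner organization of the same computation the paper sketches monomial by monomial.
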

\begin{proof}
The statement  follows from the observations that if $\beta <0 $ and $m\geq n$ then
$$ x^n\sx^{\beta-m}\leq \sx^{\beta}$$
and $$\frac{d\bra{x^n\sx^{\beta-m}}}{dx}=nx^{n-1}\sx^{\beta-m} +2\bra{\beta-m}x^{n+1}\sx^{\beta-m-2}$$
\end{proof}
\begin{lemma}\label{e:second}
Let $s\in \R$. If $f$ is in $\cal A$ then the function
$$g_s \bra{x} := \begin{cases} \frac{f\bra{x}-f\bra{s}}{x-s} \quad x \neq s\\
                                           f'\bra{s} \quad x=s
                          \end{cases}$$
is also in $\cal A$
\end{lemma}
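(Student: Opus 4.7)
The plan is to first establish smoothness of $g_s$ on all of $\R$, and then to control all derivatives via a recursion obtained from the polynomial identity $(x-s)g_s(x) = f(x)-f(s)$. Smoothness at $x=s$ is not automatic from the case definition, so I would invoke the fundamental theorem of calculus to rewrite
\[
g_s(x) = \int_0^1 f'\bra{s + t(x-s)}\, dt,
\]
which is valid for every $x \in \R$ (reducing to $f'(s)$ at $x=s$) and makes $g_s \in C^{\infty}(\R)$ by differentiation under the integral sign.

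Next I would turn the defining relation into $(x-s)\,g_s(x) = f(x)-f(s)$ and differentiate $n$ times using Leibniz, obtaining the recursion
\[
(x-s)\,g_s^{(n)}(x) + n\, g_s^{(n-1)}(x) = f^{(n)}(x), \qquad n \geq 1,
\]
equivalently $g_s^{(n)}(x) = \bra{f^{(n)}(x) - n\, g_s^{(n-1)}(x)}/(x-s)$ for $x \neq s$. This reduces the estimation of $g_s^{(n)}$ to bounds on $f^{(n)}$ and on $g_s^{(n-1)}$, together with the $1/(x-s)$ factor that supplies the extra order of decay.

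The main estimate is the claim that $g_s \in S^{-1}$, i.e., $|g_s^{(n)}(x)| \leq C_n \sx^{-1-n}$ for all $n$; this would be proved by induction. Fix $\beta < 0$ with $f \in S^\beta$. For $|x| \geq 2|s|+1$ one has $|x-s| \geq |x|/2 \geq \sx/4$, and since $\beta<0$ the function $|f|$ is bounded, so the base case gives $|g_s(x)| \leq (|f(x)|+|f(s)|)/|x-s| \leq C\sx^{-1}$. For the inductive step, the recursion and the elementary inequality $\sx^{\beta-n}\leq \sx^{-n}$ yield
\[
|g_s^{(n)}(x)| \leq \frac{c_n \sx^{\beta-n} + n\, C_{n-1}\sx^{-n}}{|x-s|} \leq C_n' \sx^{-1-n}.
\]
On the complementary compact region $|x|\leq 2|s|+1$, the smoothness of $g_s$ furnishes a uniform bound on each $g_s^{(n)}$, and since $\sx^{-1-n}$ is bounded below by a positive constant there, this bound can be absorbed into a single constant, yielding $g_s\in S^{-1}\subset \cA$.

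The main obstacle is the tension between the apparent singularity of the difference quotient at $x=s$ and the required polynomial decay at infinity. The integral representation neutralises the first, and the Leibniz recursion produces the second, but one must be careful to apply each formula only in its appropriate region and to glue the two estimates into a single global bound, which is what the compactness argument above accomplishes.
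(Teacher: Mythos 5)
Your proof is correct, and it reaches the same conclusion by a noticeably different technical route. The paper works with the explicit closed-form Leibniz expansion
$g_s^{(r)}(x)=\sum_{m=0}^{r}c_{r,m}f^{(m)}(x)(x-s)^{m-r-1}+c\,f(s)(x-s)^{-r-1}$,
bounds it using $|x-s|^{-1}\leq c_s\sx^{-1}$ for $|x-s|$ large, and disposes of the point $x=s$ by simply asserting that $\lim_{x\to s}g_s^{(m)}(x)$ exists and equals $f^{(m+1)}(s)$ (up to a factor). You replace the closed-form expansion by the first-order recursion $(x-s)g_s^{(n)}+n\,g_s^{(n-1)}=f^{(n)}$ and run an induction, and you replace the limit assertion by the Hadamard-type integral representation $g_s(x)=\int_0^1 f'\bra{s+t(x-s)}\,dt$, which gives $C^\infty$ smoothness across $x=s$ by differentiation under the integral. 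The trade-offs: your integral representation makes the smoothness at $x=s$ genuinely rigorous rather than asserted, and your explicit gluing of the far region $|x|\geq 2|s|+1$ with the compact complement fills a step the paper leaves implicit; the paper's closed-form expansion, on the other hand, exhibits the decay of every term at once without induction and pins down the precise limiting values of the derivatives at $s$. Both arguments rest on the same mechanism --- the factor $(x-s)^{-1}$ supplies the one extra order of decay needed to land in $S^{-1}\subseteq\cA$ --- so your sharper conclusion $g_s\in S^{-1}$ is consistent with, and slightly more informative than, what the paper records.
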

\begin{proof}
When $|x-s|$ is large then $$\frac{1}{|x-s|}\leq c_s\langle x\rangle^{-1}$$
for some $c_s>0$. Moreover  
$$
g_s^{\bra{r}}\sbra{x}=\sum\limits_{m=0}^r
c_r f^{\bra{m}}\sbra{x}\,\bra{x-s}^{m-r-1}\, + \,cf\bra{s}\bra{x-s}^{-r-1}
$$
and
$$ \lim\limits_{x\rightarrow s}g_s^{\bra{m}}\sbra{s}=f_s^{\bra{m+1}}\sbra{s}$$

\end{proof}
\begin{lemma}\label{lem:stable}
If $f\in S^{\beta}$ for $\beta<0$ and $g\in S^0$ then $fg \in \cA$ 
\end{lemma}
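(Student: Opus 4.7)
The plan is a direct application of the Leibniz rule together with the defining decay estimates of the symbol classes $S^\beta$ and $S^0$. Specifically, I would show the stronger statement that $fg \in S^\beta$, which immediately yields $fg \in \cA$ since $\beta < 0$.

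First, I would write out
\begin{equation*}
\bra{fg}^{\bra{n}}\bra{x} = \sum_{k=0}^{n} \binom{n}{k} f^{\bra{k}}\bra{x}\, g^{\bra{n-k}}\bra{x}.
\end{equation*}
Then I would invoke the hypotheses: since $f \in S^\beta$ there exist constants $c_k$ with $|f^{\bra{k}}\bra{x}| \leq c_k \sx^{\beta - k}$, and since $g \in S^0$ there exist constants $c'_m$ with $|g^{\bra{m}}\bra{x}| \leq c'_m \sx^{-m}$. The key observation is that each derivative of $g$ picks up an extra factor of $\sx^{-1}$, even though $g$ itself need not decay. Multiplying the two estimates yields, term-by-term,
\begin{equation*}
\bigl| f^{\bra{k}}\bra{x}\, g^{\bra{n-k}}\bra{x} \bigr| \leq c_k c'_{n-k}\, \sx^{\beta - k}\, \sx^{-\bra{n-k}} = c_k c'_{n-k}\, \sx^{\beta - n}.
\end{equation*}

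Summing over $k$ gives a constant $C_n$ such that $|\bra{fg}^{\bra{n}}\bra{x}| \leq C_n \sx^{\beta - n}$, which is precisely the defining estimate for $fg \in S^\beta$. Since $\beta < 0$, this places $fg$ in $\cA = \bigcup_{\gamma < 0} S^\gamma$.

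There is no real obstacle here; the calculation is routine once one recognises that the role of $g \in S^0$ is exactly to provide the improvement $\sx^{-1}$ per derivative, which is precisely what is needed to preserve the exponent shift $-n$ required for membership in $S^\beta$. The only thing to be mindful of is not to claim any decay from $g$ itself (indeed $g$ may be bounded away from zero), but only from its derivatives.
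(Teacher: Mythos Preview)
Your proof is correct and is essentially identical to the paper's own argument: both apply the Leibniz rule and bound each term using the defining estimates for $S^\beta$ and $S^0$ to obtain $|\bra{fg}^{\bra{n}}\bra{x}| \leq C_n \sx^{\beta - n}$. Your write-up is in fact more explicit than the paper's, which compresses the computation into a single line.
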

\begin{proof}
$$
|\sbra{fg}^{\bra{r}}\sbra{x}| 
\leq c_r\sum\limits_{m=0}^r |g^{\bra{r-m}}\sbra{x}|\,|f^{\bra{m}}\sbra{x}| 
\leq c_{r,\phi}\sx^{\beta-r}
$$
\end{proof}

The following concept of almost analytic extensions is due to H\"ormander\cite[p63]{ho}.
\begin{definition}\label{almostanalytic}
Let $\tau\bra{x,y}$ be a smooth function such that 
$$\tau\bra{x,y} := \begin{cases} 1 & \text{ if } |y|\leq \langle x \rangle \\ 0 & \text{ if } |y|\geq 2\langle x \rangle   \end{cases}$$
Then given $f \in \algebra$ we define an almost
analytic extension $\tilde{f}$ as
\begin{equation}\label{e:ae}
\tilde{f}\bra{x,y} := \bra{\sum\limits_{r=0}^n \frac{d^{r}f\bra{x}}{dx^r}\frac{\bra{iy}^r}{r!}}\tau\bra{x,y}
\end{equation}
Moreover we define 
\begin{equation}\label{e:get}
\frac{\partial \tilde{f}}{\partial \overline{z}} := \frac{1}{2}\bra{\frac{\partial \tilde{f}}{\partial x}+
i\frac{\partial \tilde{f}}{\partial y}}
\end{equation}
\end{definition}

The following lemma establishes the construction of the new functional calculus
\begin{lemma}[Davies\cite{Da1}]\label{e:important}
Let $f\in\algebra$ then define 
\begin{equation}\label{e:hsD}
f\bra{H} := -\frac{1}{\pi}\cint \frac{\partial \tilde{f}}{\partial \overline{z}}
\bra{z-H}^{-1}dxdy
\end{equation}
where $\tilde{f}$ is an almost-analytic version of $f$ as defined in definition \ref {almostanalytic}. Then 
\begin{enumerate}
\item If $n>\alpha$ then subject to hypothesis \ref{e:hypothesis} the integral \eqref{e:hsD} is norm
convergent for all $f$ in $\cA$ and
$$ \norm{f\bra{H}}\leq c\norm{f}_{n+1}$$
\item The operator $f\bra{H}$ is independent of $n$ and the cut-off function $\tau$, subject to $n>\alpha$
\item If $f$ is a smooth function of compact support disjoint from the spectrum of $H$ then $f\bra{H}=0$
\item If $f$ and $g$ are in $\cA$ then $\bra{fg}\bra{H}=f\bra{H}g\bra{H}$ 
\item If $z \not\in \R$ and $g_z\bra{x} := \bra{z-x}^{-1}$ for all $x \in \R$ then
$g_z \in \cA$ and $g_z\bra{H}=\bra{z-H}^{-1}$
\end{enumerate}
\end{lemma}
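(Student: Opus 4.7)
The backbone of every part is the explicit formula
\begin{equation*}
\partial_{\bar z}\tilde f(x,y) \;=\; \frac{f^{(n+1)}(x)}{2\cdot n!}\,(iy)^{n}\,\tau(x,y) \;+\; P_{n}(x,y)\,\partial_{\bar z}\tau(x,y),
\end{equation*}
obtained by differentiating \eqref{e:ae} and telescoping the Taylor sum $P_n(x,y)=\sum_{r=0}^{n}\frac{f^{(r)}(x)}{r!}(iy)^{r}$. The first summand is $O(|y|^{n})$ on $\{|y|\leq 2\langle x\rangle\}$, and the second is supported only on $\{\langle x\rangle\leq |y|\leq 2\langle x\rangle\}$, where $|y|$ and $\langle x\rangle$ are comparable. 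Both vanish to high order as $y\to 0$, which is precisely what tames the resolvent singularity at $\R$.

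For (i) I would split the $(x,y)$-plane into $|y|\leq 1$ and $|y|>1$, insert the bound $\|(z-H)^{-1}\|\leq c|y|^{-1-\alpha}\langle z\rangle^{\alpha}$ from Hypothesis \ref{e:hypothesis}, and integrate in $y$ first; the $|y|^{n}$ factor above forces convergence precisely when $n>\alpha$, and the remaining $x$-integrals are bounded by a multiple of $\|f\|_{n+1}$. For (ii), given two admissible pairs $(n_{1},\tau_{1})$, $(n_{2},\tau_{2})$, the identity $\tilde f_{1}-\tilde f_{2}=P_{n_{1}}-P_{n_{2}}$ near $\R$ shows that the difference vanishes to order $|y|^{\min n_{i}+1}$ on $\R$; a Stokes-type argument, carried out separately on the upper and lower half-planes (truncated to a large rectangle), where $(z-H)^{-1}$ is holomorphic, then shows that the integral of $\partial_{\bar z}(\tilde f_{1}-\tilde f_{2})$ against the resolvent vanishes. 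For (iii), since $f$ has compact support $K$ and $P_n(x,y)=0$ for $x\notin K$, $\tilde f$ is already compactly supported in $K\times[-2M,2M]$ where $M=\sup_{K}\langle x\rangle$; because $K\cap\sigma(H)=\emptyset$, $(z-H)^{-1}$ is holomorphic on an open neighbourhood of $\supp\tilde f$, so $\partial_{\bar z}[\tilde f(z-H)^{-1}]=\partial_{\bar z}\tilde f\,(z-H)^{-1}$ there, and Stokes on a large disc (with vanishing boundary term) yields $f(H)=0$.

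The core of the lemma is (iv). Substituting the resolvent identity $(z-H)^{-1}(w-H)^{-1}=(w-z)^{-1}[(z-H)^{-1}-(w-H)^{-1}]$ into the double integral defining $f(H)g(H)$ and applying Fubini (justified by the bound of (i)), each inner integral reduces via Cauchy--Pompeiu: $\cint \partial_{\bar w}\tilde g(w)/(w-z)\,dxdy=-\pi\tilde g(z)$, and the mirrored integral gives $\cint \partial_{\bar z}\tilde f(z)/(w-z)\,dxdy=+\pi\tilde f(w)$. Collecting terms and invoking the Leibniz rule $\partial_{\bar z}(\tilde f\tilde g)=\tilde g\,\partial_{\bar z}\tilde f+\tilde f\,\partial_{\bar z}\tilde g$ gives $f(H)g(H)=-\frac{1}{\pi}\cint \partial_{\bar z}(\tilde f\tilde g)(z-H)^{-1}\,dxdy$; since $\tilde f\tilde g$ is a legitimate almost-analytic extension of $fg$, (ii) identifies this with $(fg)(H)$. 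For (v), $g_{z}^{(r)}(x)=r!(z-x)^{-r-1}$ satisfies $|g_{z}^{(r)}(x)|\leq c_{r}(z)\langle x\rangle^{-r-1}$, placing $g_{z}\in S^{-1}\subset\cA$; to evaluate $g_{z}(H)$, Stokes' theorem on the complement of a small disc around $z$ (disjoint from $\R$ since $z\notin\R$), exploiting that $(z-w)^{-1}$ is holomorphic away from $z$ and $(w-H)^{-1}$ is holomorphic off $\R$, converts the area integral into a contour that picks up the residue $(z-H)^{-1}$.

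The principal obstacle is (iv): neither $\tilde f$ nor $\tilde g$ is compactly supported, and the resolvent is only bounded off $\R$ with a controlled blow-up, so Fubini and Cauchy--Pompeiu must be justified with care. The route is to approximate $\tilde f,\tilde g$ by compactly supported cut-offs, carry out the manipulation above on each approximant, and pass to the limit using the $\|\cdot\|_{n+1}$ estimate of (i) together with the density of $\kiko{\R}$ in $\cA$.
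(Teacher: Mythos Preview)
The paper does not prove this lemma: it is stated with attribution to Davies~\cite{Da1} and used as a black box throughout, so there is no in-paper proof to compare your attempt against. Your sketch is essentially the argument Davies gives in the cited source---the explicit $\partial_{\bar z}\tilde f$ formula, the $|y|^{n}$ cancellation against the $|y|^{-1-\alpha}$ resolvent blow-up for (i), Stokes on half-planes for (ii)--(iii), the resolvent identity plus Cauchy--Pompeiu for (iv), and a residue computation for (v)---and is correct at the level of a plan.

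One small caution on (iv): the Cauchy--Pompeiu step $\frac{1}{\pi}\iint \partial_{\bar w}\tilde g(w)\,(w-z)^{-1}\,dx\,dy=-\tilde g(z)$ requires $\tilde g$ to have enough decay (or compact support) for the boundary term at infinity to vanish, and you only know $\tilde g$ is bounded on horizontal strips. Your closing remark about approximating by compactly supported functions and passing to the limit via the $\|\cdot\|_{n+1}$ bound is exactly what is needed, but in a full write-up that step carries most of the weight and should be made explicit rather than deferred to a final sentence.
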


\subsection{Preliminaries}
\begin{definition}
Given $z$, $\omega$ in $\C$ we define the curve $\Gamma$ in the complex plane
$$\Gamma\sbra{z,\omega,\alpha} :=\bra{\sbra{1-\alpha}|z|+\alpha|\omega|}
e^{i\bra{1-\alpha}Arg\bra{z}+i\alpha Arg\bra{\omega}}$$
where $\alpha\in[0,1]$ and $z,w \in \C$
\end{definition}
The important property of $\Gamma$ is that it is able to connect two non-zero points in the complex plane without intersection with the origin.
\begin{theorem}\label{deeep} Let $\lambda\in \C$.
If $f$ is a smooth complex valued function in the interval $[a,b]$ where $f\bra{a}\neq \lambda$ and $f\bra{b}\neq \lambda$ then there is a smooth function 
$h$ in $C^\infty\bra{[a.b]}$ such that $$\{x\in[a,b] : h\bra{x}=\lambda\} \text { is empty }$$
Moreover ${f-g}$ and all derivatives of ${f-g}$ vanish at $a$ and $b$.  
\end{theorem}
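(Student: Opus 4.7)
The plan is to reduce the statement to constructing a smooth function $\tilde h : [a,b] \to \C \setminus \{0\}$ that agrees with $g := f - \lambda$ to infinite order at both endpoints; then $h := \tilde h + \lambda$ is the required function, since $h - f = \tilde h - g$ and $h$ never takes the value $\lambda$. (I read the ``$f-g$'' in the statement as a typo for $f-h$.)

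First I would use continuity: since $g(a)\neq 0$ and $g(b)\neq 0$, choose $\delta>0$ small enough that the image of $[a,a+\delta]$ under $g$ lies in the open disc about $g(a)$ of radius $|g(a)|/3$, and similarly on $[b-\delta,b]$ about $g(b)$. Both discs avoid the origin, so in particular $g(a+\delta)$ and $g(b-\delta)$ are nonzero, and the curve $\alpha \mapsto \Gamma\bra{g(a+\delta), g(b-\delta), \alpha}$ supplied by the preliminaries is a smooth path in $\C\setminus\{0\}$ joining these two values.

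Next I would fix a flat cut-off $\phi : \R \to [0,1]$, equal to $1$ on $(-\infty,0]$ and $0$ on $[1,\infty)$, so that every derivative $\phi^{(k)}$ vanishes outside $(0,1)$. Rescaling gives (i) $\phi_a$ on $[a,a+\delta]$ with $\phi_a\equiv 1$ near $a$ and $\phi_a$ flat-to-$0$ at $a+\delta$; (ii) a symmetric $\phi_b$ on $[b-\delta,b]$; and (iii) a smooth $\alpha : [a+\delta,b-\delta]\to[0,1]$ that is flat-to-$0$ at $a+\delta$ and flat-to-$1$ at $b-\delta$. Define $\tilde h$ piecewise by
\[
\tilde h(x) = \begin{cases}
\phi_a(x) g(x) + \sbra{1-\phi_a(x)} g(a+\delta), & x\in[a,a+\delta],\\
\Gamma\bra{g(a+\delta),\,g(b-\delta),\,\alpha(x)}, & x\in[a+\delta,b-\delta],\\
\phi_b(x) g(x) + \sbra{1-\phi_b(x)} g(b-\delta), & x\in[b-\delta,b].
\end{cases}
\]
Smoothness at the two gluing points is the whole point of using flat cut-offs: from each side of $a+\delta$ (resp.\ $b-\delta$) the function equals the constant $g(a+\delta)$ (resp.\ $g(b-\delta)$) to infinite order, so all one-sided derivatives agree and $\tilde h\in C^\infty[a,b]$. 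Nonvanishing on the two end pieces is immediate because a convex combination of two points in the disc of radius $|g(a)|/3$ (resp.\ $|g(b)|/3$) about $g(a)$ (resp.\ $g(b)$) stays in that disc and hence avoids $0$; nonvanishing on the middle piece is exactly the defining property of $\Gamma$. Finally $\phi_a\equiv 1$ on a neighborhood of $a$, so $\tilde h = g$ there and $\tilde h - g$ vanishes to infinite order at $a$; the same argument applies at $b$.

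The main obstacle, as I see it, is not the \emph{existence} of a nonvanishing smooth path between $g(a+\delta)$ and $g(b-\delta)$, which $\Gamma$ hands us, but the need to match \emph{every} derivative across the two interior gluing points. Any naive piecewise definition will produce a jump in some derivative at $a+\delta$ or $b-\delta$; the flat cut-offs $\phi_a$, $\phi_b$, $\alpha$ are what absorb this, and choosing $\delta$ small enough that the short end pieces land in a $\lambda$-free disc is what guarantees the convex-combination trick stays away from $\lambda$.
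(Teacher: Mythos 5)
Your proposal is correct and follows essentially the same route as the paper: the paper likewise uses the curve $\Gamma$ applied to $f(a)-\lambda$ and $f(b)-\lambda$ to bridge the middle of the interval through $\C\setminus\{\lambda\}$, and a smooth approximate characteristic function to blend back into $f$ near the endpoints, where $f$ itself avoids $\lambda$. The only substantive refinement in your version is the explicit disc argument (convexity of a small $\lambda$-free disc) guaranteeing that the blended transition regions cannot hit $\lambda$ --- a point the paper's proof, which takes a single global convex combination $(1-\Psi)f+\Psi g$, leaves implicit.
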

\begin{proof}Let 
\begin{equation}
g\bra{x}:=\Gamma\bra{f\bra{a}-\lam,f\bra{b}-\lam,
\frac{x-a}{b-a}}+\lam
\end{equation}
Since $f$ is continuous we know there is an $0<\epsilon<\frac{b-a}{2}$ such that 
$$\{x\in[a,b]/\bra{a+\epsilon,b-\epsilon} : f\bra{x}=\lambda\} = \emptyset$$
Then we can define 
$$h := \bra{1-\charac{a+\epsilon}{b-\epsilon}{\epsilon}}f  + \charac{a+\epsilon}{b-\epsilon}{\epsilon}g $$
\end{proof}

\begin{lemma}\label{gotcha}
Given  $f\in\algebra$, let $\lambda$ be a non-zero point in $\C$ and let $A_\lambda := \{x : f\bra{x}=\lambda\}$\\
If $A_\lambda \cap \sigma\bra{H}$ is empty then there is a function $h\in\algebra$ such that 
$h\bra{x}\neq \lambda$ for all $x\in\R$ and $$h\bra{H}=f\bra{H}$$

\end{lemma}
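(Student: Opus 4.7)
The plan is to modify $f$ only on small regions that avoid $\sigma(H)$, so that $(h-f)(H)=0$ by item (iii) of Lemma \ref{e:important}, while simultaneously removing every preimage of $\lambda$ with the help of Theorem \ref{deeep}.

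First I would observe that $A_\lambda$ is compact. Since $f\in S^\beta$ for some $\beta<0$, $f(x)\to 0$ as $|x|\to\infty$, and $\lambda\neq 0$ forces $A_\lambda$ to be bounded; it is also closed by continuity of $f$. Using that $A_\lambda\cap\sigma(H)=\emptyset$ and both sets are closed with $A_\lambda$ compact, I get an open set $U\supset A_\lambda$ whose closure is disjoint from $\sigma(H)$. Writing $U$ as a disjoint union of open intervals and using compactness of $A_\lambda$, only finitely many components meet $A_\lambda$; call them $I_1,\dots,I_n$. On each component $I_k$, the set $A_\lambda\cap I_k$ is compact and strictly inside $I_k$, so I can pick $a_k,b_k\in I_k\setminus A_\lambda$ with $A_\lambda\cap I_k\subset(a_k,b_k)$; in particular $f(a_k),f(b_k)\neq\lambda$.

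Next I would apply Theorem \ref{deeep} on each interval $[a_k,b_k]$ to produce a smooth $h_k:[a_k,b_k]\to\C$ with $h_k(x)\neq\lambda$ everywhere and such that $h_k-f$ and all its derivatives vanish at $a_k$ and $b_k$. I then define
\begin{equation*}
h(x):=\begin{cases} h_k(x) & x\in[a_k,b_k],\ k=1,\dots,n,\\ f(x) & x\notin\bigcup_{k=1}^{n}[a_k,b_k].\end{cases}
\end{equation*}
The vanishing of all derivatives of $h_k-f$ at the endpoints guarantees that $h$ is smooth on $\R$. By construction $h\neq\lambda$ on each $[a_k,b_k]$, and $h=f\neq\lambda$ elsewhere because $A_\lambda\subset\bigcup_k[a_k,b_k]$.

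Finally I would verify membership in $\algebra$ and equality of the operators. The difference $h-f$ is supported in the compact set $\bigcup_k[a_k,b_k]\subset\overline{U}$, so $h-f\in C^\infty_c(\R)\subset\algebra$, and hence $h=f+(h-f)\in\algebra$. Since $\mathrm{supp}(h-f)$ is disjoint from $\sigma(H)$, item (iii) of Lemma \ref{e:important} gives $(h-f)(H)=0$, whence $h(H)=f(H)$ by linearity of the functional calculus. The one step that requires care is arranging the finite covering of $A_\lambda$ by intervals whose closures avoid $\sigma(H)$ and whose endpoints avoid $A_\lambda$; once this topological bookkeeping is done, Theorem \ref{deeep} and property (iii) of the calculus do the rest essentially for free.
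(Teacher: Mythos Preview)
Your proof is correct and follows essentially the same route as the paper: show $A_\lambda$ is compact (using $\lambda\neq 0$ and $f\to 0$), cover it by finitely many disjoint closed intervals in $\rho(H)$ with endpoints off $A_\lambda$, apply Theorem \ref{deeep} on each, and conclude via Lemma \ref{e:important}(iii) since $h-f$ has compact support disjoint from $\sigma(H)$. You simply make the topological bookkeeping more explicit than the paper does.
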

\begin{proof}
If $A_\lambda$ is empty then we put $h=f$.\\ If $A_\lambda$ is not empty then 
$A_\lambda$ is a compact subset of $\rho\bra{H}$. Moreover
$A_\lambda$ can be covered by a finite set of closed disjoint intervals $[a_i,b_i]$ which are also subsets of $\rho\bra{H}$.
By applying theorem \ref{deeep} to each interval we can find a function $h$ in $\algebra$ such $$h\sbra{x}=f\sbra{x} \text{ for all }x \in \sigma\bra{H}$$ 
and $h\sbra{x}\neq\lambda \text{ for all }x \in \R$.
Moreover since $\sbra{f-h}$ has compact support in $\rho\bra{H}$ then it follows from 
lemma \ref{e:important}(iii) that $h\bra{H}=f\bra{H}$.
\end{proof}

\section{Bounded Operators}
We let $B$ be a bounded operator satisfying hypothesis (\ref{e:hypothesis}). Moreover let $$ u := \sup \sigma\bra{B} \quad \text { and } \quad l := \inf\sigma\bra{B} $$

\begin{lemma}\label{compactifyf}
For any $f\in\algebra$ and $\epsilon>0$ $$f\charac{l'}{u'}{\epsilon}\bra{B} = f\bra{B} $$  
where $l'\leq l$ and $u'\geq u$
\end{lemma}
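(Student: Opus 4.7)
The plan is to reduce the lemma to showing that $h(B) = 0$, where
\[ h \;:=\; f - f\,\charac{l'}{u'}{\epsilon} \;=\; f\bigl(1 - \charac{l'}{u'}{\epsilon}\bigr). \]
Since $1 - \charac{l'}{u'}{\epsilon}$ lies in $S^{0}$, Lemma \ref{lem:stable} gives $h \in \cA$. The map $f \mapsto f(B)$ defined by \eqref{e:hsD} is manifestly linear in $f$, so the identity $(f\charac{l'}{u'}{\epsilon})(B) = f(B)$ is equivalent to $h(B) = 0$.

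The structural point I would exploit is that $\charac{l'}{u'}{\epsilon}$ is not merely equal to $1$ on $[l',u']$ but is \emph{flat} at both endpoints: every derivative of $\psi_{a,\epsilon}$ vanishes at $a$, since $\psi_{a,\epsilon}\equiv 1$ on $[a,\infty)$. Consequently $h$ and every $h^{(r)}$ vanish identically on $[l',u']$. Reading off \eqref{e:ae}, the almost-analytic extension $\tilde h(x,y)$ then vanishes on the entire closed strip $[l',u'] \times \R$, and in particular so does $\partial \tilde h/\partial \bar z$. The integrand of \eqref{e:hsD} is therefore supported in the two open half-planes
\[ \Omega_{-} \;:=\; \{\Re z < l'\}, \qquad \Omega_{+} \;:=\; \{\Re z > u'\}, \]
on each of which $(z - B)^{-1}$ is $z$-holomorphic because $\sigma(B) \subseteq [l,u] \subseteq [l',u']$.

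With the integrand confined to regions where the resolvent is holomorphic, I would finish with a Cauchy--Green/Stokes application. Using $d\bar z \wedge dz = 2i\,dx\wedge dy$,
\[
d\bigl(\tilde h\,(z-B)^{-1}\, dz\bigr) \;=\; 2i\,\frac{\partial \tilde h}{\partial \bar z}\,(z-B)^{-1}\, dx \wedge dy
\]
throughout $\Omega_{\pm}$. Integrating this identity over a large truncated rectangle inside $\Omega_{\pm}$ and passing to the limit, Stokes converts each area integral into a line integral on the boundary. The inner vertical side lies on $\{\Re z = l'\}$ or $\{\Re z = u'\}$, where $\tilde h \equiv 0$, and hence contributes nothing; the outer sides vanish in the limit thanks to the decay $|\tilde h(x,y)| \leq c\sx^{\beta}$ on $\supp \tilde h$ (inherited from $h \in S^{\beta}$ with some $\beta<0$), combined with the resolvent growth bound in Hypothesis \ref{e:hypothesis}. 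Thus both $\Omega_{\pm}$-integrals vanish, $h(B)=0$, and the lemma follows.

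The main obstacle I anticipate is the rigorous control of these boundary contributions in the limit, especially in the borderline case $l' = l$ (or $u' = u$), where the inner boundary of $\Omega_{-}$ (respectively $\Omega_{+}$) passes through a point of $\sigma(B)$ at which $(z-B)^{-1}$ is singular. The saving grace is that $h$ is flat at $l'$, so $|h^{(r)}(l' - \delta)| = O(\delta^{N})$ for every $N$; this rapid vanishing of $\tilde h$ as one approaches the inner boundary line overwhelms any algebraic blow-up of $\|(z-B)^{-1}\|$ permitted by Hypothesis \ref{e:hypothesis}, so no strict inequality between $l', u'$ and the spectrum is required.
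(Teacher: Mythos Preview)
Your argument is sound: the flatness of $1-\charac{l'}{u'}{\epsilon}$ on $[l',u']$ indeed kills $\tilde h$ on the strip, after which a Cauchy--Green/Stokes computation on $\Omega_\pm$ does the job, the endpoint case $l'=l$ being salvaged by the infinite-order vanishing you describe. However, this is a genuinely different route from the paper's. The paper does not touch Stokes here at all; it simply observes that if $f\in C_c^\infty(\R)$ then $f-f\charac{l'}{u'}{\epsilon}$ is compactly supported away from $\sigma(B)$ and invokes Lemma~\ref{e:important}(iii) directly, then passes to general $f\in\cA$ by the density of $C_c^\infty(\R)$ in $\cA$ together with the norm bound $\norm{f(B)}\leq c\norm{f}_{n+1}$. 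What you have written is, in effect, an inline reproof of Lemma~\ref{e:important}(iii) specialised to this $h$, which makes your argument self-contained and avoids the density step, but at the cost of redoing work already packaged in the functional-calculus lemma. One incidental benefit of your version is that the flatness argument covers the borderline $l'=l$, $u'=u$ cleanly, whereas the paper's appeal to ``support disjoint from the spectrum'' is literally correct only for $l'<l$, $u'>u$; the paper handles the analogous endpoint issue separately much later, in the semi-bounded section.
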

\begin{proof}
Suppose $f$ has compact support then $f-f\charac{l'}{u'}{\epsilon}$ has compact support disjoint from the spectrum of $B$ hence
by lemma \ref{e:important}(iii) the statement of the lemma is true for functions in $C^\infty_0\sbra{\R}$. The statement for all $f\in\algebra$ follows from
density of $C^\infty_0\sbra{\R}$ in $\algebra$. 
\end{proof}

\begin{lemma}\label{calcforBO}
Let $f\in \algebra$. If $\epsilon>0$ and 
$$D_{{}_\epsilon}:= \{z : |z-\tfrac{u+l}{2}|<\tfrac{u-l}{2}+\epsilon\}\quad \text{ and } 
\quad \partial D_{{}_\epsilon}:= \{z : |z-\tfrac{u+l}{2}|=\tfrac{u-l}{2}+\epsilon\}$$ then 
$$f\bra{B} = \frac{1}{2\pi i}\int_{\partial D_{{}_\epsilon}} \tilde{f}\bra{z}\bra{z-B}^{-1} dz -\frac{1}{\pi}\int_{D_{{}_\epsilon}} \frac{\partial \tilde{f}}{\partial \overline{z}}
\bra{z-B}^{-1}dxdy$$
\end{lemma}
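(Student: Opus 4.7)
The plan is to derive the identity directly from the defining formula \eqref{e:hsD} by splitting the integration domain along $\partial D_{{}_\epsilon}$ and converting the outer two-dimensional integral into a contour integral via Stokes' theorem. The crucial observation is that $\sigma\bra{B}\subseteq [l,u] \subset D_{{}_\epsilon}$, so on $\C\setminus \overline{D_{{}_\epsilon}}$ the resolvent $z\mapsto \bra{z-B}^{-1}$ is holomorphic; consequently
\[
\frac{\partial}{\partial \overline{z}}\bra{\tilde{f}\bra{z}\bra{z-B}^{-1}} \;=\; \frac{\partial \tilde{f}}{\partial \overline{z}}\bra{z-B}^{-1}
\]
throughout this region, which is precisely what allows Stokes to come in.

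Concretely, for each sufficiently large $R$ I would apply Stokes' theorem, in the form $\iint_\Omega \partial_{\overline{z}} g\, dxdy = \frac{1}{2i}\oint_{\partial \Omega}g\, dz$, to the bounded annulus $A_R := D_R \setminus \overline{D_{{}_\epsilon}}$ with integrand $g\bra{z} := \tilde{f}\bra{z}\bra{z-B}^{-1}$. With the outer circle oriented counterclockwise and the inner one clockwise this gives
\[
\iint_{A_R} \frac{\partial \tilde{f}}{\partial \overline{z}}\bra{z-B}^{-1} dxdy \;=\; \frac{1}{2i}\int_{\partial D_R}\tilde{f}\bra{z}\bra{z-B}^{-1}dz \;-\; \frac{1}{2i}\int_{\partial D_{{}_\epsilon}}\tilde{f}\bra{z}\bra{z-B}^{-1}dz.
\]

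The main technical step is then to send $R\to\infty$. Since $f\in\algebra$ there is some $\beta<0$ with $f\in S^{\beta}$, and the explicit formula \eqref{e:ae} together with the cutoff $\tau$ forces $|\tilde{f}\bra{z}|\leq c\sx^{\beta}$ with $\tilde{f}$ vanishing wherever $|y|>2\sx$. On $\partial D_R$ the support constraint forces $|x|\sim R$, so $|\tilde{f}\bra{z}|\leq cR^{\beta}$; combined with the standard bound $\norm{\bra{z-B}^{-1}}=O\bra{1/R}$ for $|z|$ large and the circumference $2\pi R$ of $\partial D_R$, the outer contour integral is $O\bra{R^{\beta}}\to 0$. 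Meanwhile the area integral over $A_R$ converges to its counterpart over $\C\setminus\overline{D_{{}_\epsilon}}$ by the absolute convergence furnished by Lemma \ref{e:important}(i). Splitting the defining integral for $f\bra{B}$ as $-\tfrac{1}{\pi}\iint_{D_{{}_\epsilon}} + \bra{-\tfrac{1}{\pi}}\iint_{\C\setminus\overline{D_{{}_\epsilon}}}$ and substituting the boundary expression for the second piece produces exactly the identity asserted.

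I expect the main obstacle to be purely bookkeeping - tracking signs and orientations through Stokes' theorem together with the factors $-1/\pi$ and $1/\bra{2i}$ so that $-\bra{1/\pi}\cdot\bra{-1/2i}$ matches the stated $1/\bra{2\pi i}$ - while the only genuinely analytic input beyond the construction of the Helffer-Sj\"ostrand functional calculus is the $R^\beta$ decay of $\tilde{f}$ on the outer circle, and that is immediate from $\beta<0$.
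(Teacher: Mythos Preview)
Your argument is correct and follows the same overall strategy as the paper: split the defining integral along $\partial D_{{}_\epsilon}$, apply Stokes' theorem on the annulus $A_R$, and eliminate the outer contour. The only real difference is in how that outer contour is disposed of. The paper first invokes Lemma~\ref{compactifyf} to replace $f$ by $f\charac{l'}{u'}{\epsilon}$, so that $\tilde f$ has compact support and the integral over $|z-\tfrac{u+l}{2}|=R$ is identically zero once $R$ is large; you instead keep a general $f\in\cA$ and estimate the outer contour directly via $|\tilde f|\lesssim R^{\beta}$ on its support together with the Neumann-series bound $\norm{(z-B)^{-1}}=O(1/R)$, obtaining $O(R^{\beta})\to 0$. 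Your route avoids the preliminary reduction lemma at the cost of a short decay computation; the paper's route trades that computation for a density argument. Both are entirely sound, and your bookkeeping of the factors $-\tfrac{1}{\pi}\cdot\bigl(-\tfrac{1}{2i}\bigr)=\tfrac{1}{2\pi i}$ is correct.
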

\begin{proof}
By lemma \ref{compactifyf} we can assume that $f$ has compact support in $\left[l-\epsilon, u+\epsilon\right]$ \\
If $R>\tfrac{u-l}{2}+\epsilon$ and $A_R$ is the annulus $\{z : \tfrac{u-l}{2}+\epsilon<|z-\tfrac{u+l}{2}|<R\}$ then 
$$\int_{|z-\tfrac{u+l}{2}|<R} \frac{\partial \tilde{f}}{\partial \overline{z}} \bra{z-B}^{-1}dxdy = \int_{A_R} \frac{\partial \tilde{f}}{\partial \overline{z}}
\bra{z-B}^{-1}dxdy +  \int_{D_{{}_\epsilon}} \frac{\partial \tilde{f}}{\partial \overline{z}}
\bra{z-B}^{-1}dxdy$$
Applying Stokes' theorem 

$$ \int_{A_R} \frac{\partial \tilde{f}}{\partial \overline{z}}
\bra{z-B}^{-1}dxdy = \frac{1}{2i}\int_{|z-\tfrac{u+l}{2}|=R} \tilde{f}\bra{z-B}^{-1}dz - \frac{1}{2i}\int_{\partial D_{{}_\epsilon} } \tilde{f}\bra{z-B}^{-1}dz$$ 
and letting $R$ be large enough for $\tilde{f}$ to vanish on $\{z:|z-\tfrac{u+l}{2}|=R\}$ completes the proof. 
\end{proof}

\begin{lemma}\label{oneforBO}
Let $\epsilon>0$. If  $l'< l$ and $u'>u$ then $$\charac{l'}{u'}{\epsilon}\bra{B} = 1  $$ 
\end{lemma}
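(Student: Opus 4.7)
The plan is to compare the contour representation of Lemma~\ref{calcforBO} applied to $f=\charac{l'}{u'}{\epsilon}$ with the elementary identity $I=\frac{1}{2\pi i}\int_{\partial D_\delta}(z-B)^{-1}\,dz$ on a small disk $D_\delta$ enclosing $\sigma\bra{B}$, and then to show that the two resulting representations agree by a single application of Stokes' theorem on $D_\delta$.

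First I would fix $\delta>0$ with $\delta<\min\{l-l',\,u'-u\}$. With this choice every $z=x+iy$ in the closed disk $\overline{D_\delta}$ satisfies $x\in[l-\delta,u+\delta]\subset(l',u')$, and on $(l',u')$ the function $f=\charac{l'}{u'}{\epsilon}$ is identically $1$ with all derivatives vanishing. Hence the almost analytic extension of Definition~\ref{almostanalytic} reduces to $\tilde f(x,y)=\tau(x,y)$ on $\overline{D_\delta}$, and Lemma~\ref{calcforBO} (with disk parameter $\delta$ playing the role of the $\epsilon$ there) gives
\[
\charac{l'}{u'}{\epsilon}\bra{B} \;=\; \frac{1}{2\pi i}\int_{\partial D_\delta}\tau(z)(z-B)^{-1}\,dz \;-\; \frac{1}{\pi}\int_{D_\delta}\frac{\partial\tau}{\partial\bar z}(z-B)^{-1}\,dxdy.
\]

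Next I would record that $I=\frac{1}{2\pi i}\int_{\partial D_\delta}(z-B)^{-1}\,dz$. This is immediate from the Neumann expansion $(z-B)^{-1}=\sum_{k\ge 0}B^k z^{-k-1}$ valid for $|z|>\norm{B}$, which integrates term by term to $2\pi i\,I$ over a circle of large radius, together with Cauchy's theorem on the annulus between $\partial D_\delta$ and that circle, where $(z-B)^{-1}$ is holomorphic since $\sigma\bra{B}\subset[l,u]\subset D_\delta$. Subtracting yields
\[
\charac{l'}{u'}{\epsilon}\bra{B}-I \;=\; \frac{1}{2\pi i}\int_{\partial D_\delta}(\tau-1)(z-B)^{-1}\,dz\;-\;\frac{1}{\pi}\int_{D_\delta}\frac{\partial\tau}{\partial\bar z}(z-B)^{-1}\,dxdy.
\]

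The main obstacle is then to collapse the two terms above by applying Stokes' theorem to the operator-valued $1$-form $\omega:=(\tau-1)(z-B)^{-1}\,dz$ on $D_\delta$; the difficulty is that $(z-B)^{-1}$ is singular on $\sigma\bra{B}\subset D_\delta$, so $\omega$ is not \emph{a priori} smooth there. The way out is the observation that, by Definition~\ref{almostanalytic}, $\tau\equiv 1$ on the open set $\{(x,y):|y|<\langle x\rangle\}$, which is an open neighborhood of $\R$ and hence of $\sigma\bra{B}\cap D_\delta$; consequently $\tau-1$ vanishes identically there and extending $\omega$ by zero across that neighborhood makes $\omega$ smooth on all of $D_\delta$. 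A product-rule computation on $D_\delta\setminus\sigma\bra{B}$ then gives $d\omega=\frac{\partial\tau}{\partial\bar z}(z-B)^{-1}\,d\bar z\wedge dz = 2i\,\frac{\partial\tau}{\partial\bar z}(z-B)^{-1}\,dx\wedge dy$, and Stokes' theorem on $D_\delta$ identifies the two right-hand terms of the previous display, leaving $\charac{l'}{u'}{\epsilon}\bra{B}=I$.
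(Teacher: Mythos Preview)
Your argument is correct, but it takes a somewhat different route from the paper's own proof. The paper does not reuse the disk $D_\delta$ of Lemma~\ref{calcforBO}; instead it runs the same Stokes argument over a \emph{thin rectangle}
\[
\Omega=\Bigl\{z:\ \bigl|\Re z-\tfrac{u'+l'}{2}\bigr|<\tfrac{u'-l'}{2},\ |\Im z|<\delta\Bigr\}
\]
with $0<\delta<1$. On this rectangle one has both $x\in(l',u')$ (so all derivatives of $\charac{l'}{u'}{\epsilon}$ vanish and the function equals $1$) and $|y|<\delta<1\le\langle x\rangle$ (so $\tau\equiv 1$). Hence $\tilde f\equiv 1$ on $\overline{\Omega}$, the area integral drops out entirely, and the boundary integral is simply $\frac{1}{2\pi i}\int_{\partial\Omega}(z-B)^{-1}\,dz=I$ by Cauchy's formula.

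Your approach instead keeps the disk from Lemma~\ref{calcforBO}, on which one can only conclude $\tilde f=\tau$ (since the disk may reach into $\{|y|>\langle x\rangle\}$), and then disposes of the residual $\tau$-contribution by a second application of Stokes' theorem to $(\tau-1)(z-B)^{-1}\,dz$, exploiting that $\tau-1$ vanishes near $\R$. This is perfectly valid and has the virtue of invoking Lemma~\ref{calcforBO} verbatim rather than reproving it on a new domain; the paper's choice of a thin rectangle buys the simplification of avoiding that second Stokes step altogether.
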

 
\begin{proof}
Let $0<\delta<1$ and define $\Omega$ as the open rectangle $$\{z\in \C : |Re\, z-\tfrac{u'+l'}{2}|<\tfrac{u'-l'}{2} , \quad |Im\, z| < \delta\}$$
Using a similar argument to that given in the proof of lemma \ref {calcforBO} we see that 
$$\charac{l'}{u'}{\epsilon}\bra{B} = \frac{1}{2\pi i}\int_{\partial \Omega} \tcharac{l'}{u'}{\epsilon}\sbra{z,\overline{z}}\bra{z-B}^{-1} dz -
\frac{1}{\pi}\int_{\Omega} \frac{\partial \tcharac{l'}{u'}{\epsilon}}{\partial \overline{z}}\bra{z-B}^{-1}dxdy$$
When $l'\leq x \leq u'$  then $\charac{l'}{u'}{\epsilon}\sbra{x} = 1$. Moreover when $l'\leq x \leq u'$  then $\dcharac{l'}{u'}{\epsilon}{\bra{n}}\sbra{x} = 0$ for all $n>0$. 
Recalling definition (\ref{e:ae}) we can see that $$\tcharac{l'}{u'}{\epsilon}\sbra{z,\overline{z}}=1 \quad \text { for all } z\in \overline{\Omega_0}$$ 
hence 
$$\charac{l'}{u'}{\epsilon}\bra{B} = \frac{1}{2\pi i}\int_{\partial \Omega} \bra{z-B}^{-1} dz $$
and we conclude with an application of Cauchy's integral formula.
 
\begin{figure}[h!]
 \centering
 \includegraphics[scale=.5 ]{./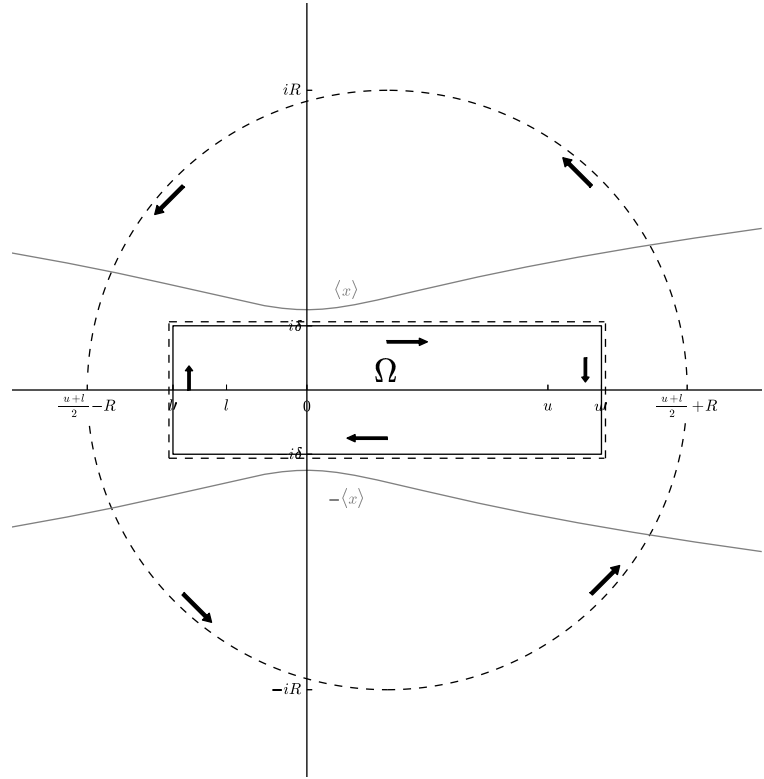}
 \caption{Integral domain for lemma \ref {oneforBO}}
 \label{fig:integral}
\end{figure}
\end{proof}

\section{Enlargement of $\cA$}
We extend the algebra $\cA$ of slow decaying functions in a trivial but necessary way.
\begin{definition}Let
$$\hat{\cA} := \{\bra{z,f}:z \in \C , f \in \algebra\}$$
where for each $x\in \R$ we define $$\bra{z,f}\sbra{x} := z + f\sbra{x}$$
Moreover we define point-wise addition and multiplication:
\begin{equation*}
\begin{split}
\bra{\omega,f}\circ \bra{z,g}:=\bra{\omega z,\omega g+zf+fg} \\
\bra{\omega,f}+ \bra{z,g}:=\bra{\omega+z,f+g}
\end{split}
\end{equation*}
It is clear that $\bra{1,0}$ the multiplicative identity and $\bra{0,0}$ the additive identity are in $\ealgebra$ and 
the algebra is closed under these operations.\\
For any $z\in\C$ we will denote $\bra{z,0}\in\ealgebra$ simply by $z$.\\
Given $\phi=\bra{z,f}\in\ealgebra$, let $$\pi_{{}_{\cA,\phi}} := f \quad \text{ and } \quad \pi_{{}_{C,\phi}} := z$$
and let
\begin{equation*}\label{e:normrel}
\norm{\phi}_n:=|\pi_{{}_{C,\phi}}| + \norm{\pi_{{}_{\cA,\phi}}}_n
\end{equation*}
\end{definition}
\begin{definition}
We have the  extended functional calculus. For $\phi \in \ealgebra$ let
$$ \phi\sbra{H} := \pi_{{}_{\cA,\phi}}\sbra{H} + \pi_{{}_{C,\phi}}I$$
along with the implied norm
\begin{eqnarray*}
\norm{\phi\bra{H}}&:=&|\pi_{{}_{C,\phi}}| + \norm{\pi_{{}_{\cA,\phi}}\bra{H}} \\
& \leq & |\pi_{{}_{C,\phi}}| + \norm{\pi_{{}_{\cA,\phi}}}_{n+1}\\
&=& \norm{\phi}_{n+1}
\end{eqnarray*}
\end{definition}
\begin{definition}
For $\phi \in \ealgebra$ let:
$$ \mu\bra{\phi} := \frac{1}{\pi_{{}_{C,\phi}} + \pi_{{}_{\cA,\phi}}}-\frac{1}{\pi_{{}_{C,\phi}} }$$
\end{definition}
\begin{lemma}\label{lem:thisone}
If $\phi \in \ealgebra$\ and $-\pi_{{}_{C,\phi}}$ is not in  $\overline{Ran\sbra{\pi_{{}_{\cA,\phi}}}}$
then $\mu\bra{\phi}$ is in $\cA$ and $$\phi^{-1} = \bra{\frac{1}{\pi_{{}_{C,\phi}} },\mu\bra{\phi}}$$
\end{lemma}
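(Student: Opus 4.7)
Write $z := \pi_{{}_{C,\phi}}$ and $f := \pi_{{}_{\cA,\phi}}$, so $\phi\sbra{x} = z + f\sbra{x}$. Since $f\in\cA$ lies in some $S^\beta$ with $\beta<0$, one has $f\sbra{x}\to 0$ as $|x|\to\infty$, so $\overline{\Ran\sbra{f}}$ is a compact subset of $\C$ containing $0$. The hypothesis $-z\notin\overline{\Ran\sbra{f}}$ therefore does two things at once: it forces $z\neq 0$ (so $1/z$ is meaningful), and it supplies a positive number $c:=\dist\sbra{-z,\overline{\Ran\sbra{f}}}>0$ giving the uniform lower bound $|z+f\sbra{x}|\geq c$ for all $x\in\R$.

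The verification that $\phi^{-1}=\bra{1/z,\mu\bra{\phi}}$ is a right inverse is then purely algebraic: with the product rule from the definition of $\ealgebra$,
$$
\bra{z,f}\circ\bra{1/z,\mu\bra{\phi}} \;=\; \bra{1,\,z\mu\bra{\phi} + f/z + f\mu\bra{\phi}},
$$
and substituting $\mu\bra{\phi}=\tfrac{1}{z+f}-\tfrac{1}{z}$ collapses the second coordinate to $\tfrac{z+f}{z+f}-1+\tfrac{f}{z}-\tfrac{f}{z}=0$. Commutativity of $\circ$ gives the left inverse at no extra cost. So the real content is to show that $\mu\bra{\phi}=-f\cdot\tfrac{1}{z\sbra{z+f}}$ actually lies in $\cA$.

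For that, I plan to show $g:=\bra{z+f}^{-1}\in S^0$ and then apply Lemma \ref{lem:stable} directly to the product $f\cdot g$, noting that $f\in S^\beta$ with $\beta<0$, which will give $fg\in\cA$ and hence $\mu\bra{\phi}\in\cA$ after multiplication by the scalar $-1/z$. Membership in $S^0$ requires the estimates $|g|\leq c_0$ and $|g^{\bra{r}}|\leq c_r\sx^{-r}$ for $r\geq 1$. The first is immediate from the lower bound $|z+f|\geq c$. For the second, I would argue by induction from $g'=-g^2 f'$: applying Fa\`a di Bruno (or just repeated differentiation), each $g^{\bra{r}}$ is a finite sum of terms of the form $g^{k+1}\prod_{i} f^{\bra{r_i}}$ with $\sum_i r_i=r$ and $k\geq 1$; since $g$ is bounded and each $|f^{\bra{r_i}}|\leq c\sx^{\beta-r_i}$, the typical summand is $O\sbra{\sx^{k\beta-r}}=O\sbra{\sx^{-r}}$ because $\beta<0$.

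The main obstacle I expect is this derivative estimate for $g$: one has to be careful that products of decaying factors do not produce any unwelcome growth, and in particular that the sum over Fa\`a di Bruno partitions still gives $\sx^{-r}$ rather than the stronger $\sx^{\beta-r}$ that the leading term would suggest. Once that bookkeeping is done, everything else is either the uniform lower bound, the previously established Lemma \ref{lem:stable}, or the short algebraic identity that makes $\phi\circ\phi^{-1}=\bra{1,0}$.
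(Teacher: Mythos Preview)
Your proposal is correct and follows essentially the same route as the paper: factor $\mu\bra{\phi}=-\tfrac{1}{z}\,f\cdot\bra{z+f}^{-1}$, show that $\bra{z+f}^{-1}\in S^0$ by routine differentiation (the paper simply calls this step ``a routine exercise''), and then invoke Lemma~\ref{lem:stable}. You supply more detail than the paper does---the explicit algebraic check that $\phi\circ\phi^{-1}=\bra{1,0}$ and the Fa\`a di Bruno bookkeeping---but the skeleton is identical.
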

\begin{proof}
By re-writing
$$\mu\bra{\phi}=\frac{1}{\pi_{{}_{C,\phi}} +\pi_{{}_{\cA,\phi}}}-\frac{1}{\pi_{{}_{C,\phi}} } =
\frac{-\pi_{{}_{\cA,\phi}}}{\pi_{{}_{C,\phi}} \bra{\pi_{{}_{C,\phi}} +\pi_{{}_{\cA,\phi}}}}$$
then it is routine exercise in differentiation  to show that
 $$\frac{-1}{\pi_{{}_{C,\phi}} \bra{\pi_{{}_{C,\phi}} +\pi_{{}_{\cA,\phi}}}}$$ is in $S^0$.
Then since $\pi_{{}_{\cA,\phi}}$ is in $\cA$, lemma \ref{lem:stable} implies the statement.
\end{proof}
\begin{corollary}\label{inversecor}
 Given $\phi\in \ealgebra$ and  $\lambda\in\C$ such that $\phi\bra{x}\neq \lambda$ for all $x\in\R$ then $$\bra{\phi-\lambda}^{-1}\in\ealgebra$$
\end{corollary}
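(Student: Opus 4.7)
The plan is to reduce the claim to Lemma \ref{lem:thisone} applied to the element $\phi - \lambda \in \ealgebra$. Writing $\phi = \bra{z, f}$ with $z = \pi_{{}_{C,\phi}}$ and $f = \pi_{{}_{\cA,\phi}}$, I have $\phi - \lambda = \bra{z-\lambda, f}$, so that the hypothesis $\phi\bra{x} \neq \lambda$ for all $x \in \R$ translates directly into $f\bra{x} \neq \lambda - z$ for every $x \in \R$. If I can verify that $-\bra{z-\lambda} = \lambda - z$ does not lie in $\overline{\Ran\bra{f}}$, then Lemma \ref{lem:thisone} applied to $\phi - \lambda$ yields $\bra{\phi - \lambda}^{-1} = \bra{\tfrac{1}{z-\lambda},\, \mu\bra{\phi - \lambda}} \in \ealgebra$, which is exactly the conclusion sought.

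To check that $\mu := \lambda - z$ lies outside $\overline{\Ran\bra{f}}$, I would split the real line into a compact core and its complement. Because $f \in \cA$ there exist $\beta < 0$ and $c > 0$ with $|f\bra{x}| \leq c\sx^{\beta}$, so $f\bra{x} \to 0$ as $|x| \to \infty$. Assuming $\mu \neq 0$ (see the remark below), choose $M > 0$ with $|f\bra{x}| < |\mu|/2$ for $|x| > M$; this immediately gives $|f\bra{x} - \mu| > |\mu|/2$ outside $[-M, M]$. On the compact interval $[-M, M]$, continuity of $f$ together with the pointwise hypothesis $f \neq \mu$ yields a strictly positive $\dist\bra{\mu,\, f\bra{[-M, M]}}$. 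Taking the minimum of the two positive bounds gives $\dist\bra{\mu,\, \Ran\bra{f}} > 0$, hence $\mu \notin \overline{\Ran\bra{f}}$ as required, and Lemma \ref{lem:thisone} closes the argument.

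The principal subtlety — not really an obstacle but worth flagging — is conceptual: the stated hypothesis ``$\phi\bra{x} \neq \lambda$ for all $x \in \R$'' is strictly weaker than ``$\lambda \notin \overline{\Ran\bra{\phi}}$'', because every $f \in \cA$ decays at infinity and therefore $z = \pi_{{}_{C,\phi}}$ automatically belongs to $\overline{\Ran\bra{\phi}}$. Hence the corollary must tacitly exclude $\lambda = \pi_{{}_{C,\phi}}$, for in that degenerate case $\phi - \lambda = \bra{0, f}$ has zero complex component and can admit no inverse in $\ealgebra$ at all: any product $\bra{0, f}\circ\bra{w, g} = \bra{0,\, wf + fg}$ fails to equal $\bra{1, 0}$. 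Once this case is set aside, the compactness-and-decay argument above carries the proof through without any further work.
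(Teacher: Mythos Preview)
Your proposal is correct and follows exactly the approach the paper intends: the corollary is stated without proof immediately after Lemma \ref{lem:thisone}, and the implicit argument is precisely to apply that lemma to $\phi-\lambda$. You have supplied the missing verification that $\lambda - \pi_{{}_{C,\phi}} \notin \overline{\Ran\bra{\pi_{{}_{\cA,\phi}}}}$ via the standard compactness-plus-decay argument, and you have also correctly flagged the tacit hypothesis $\lambda \neq \pi_{{}_{C,\phi}}$, without which the statement as written fails (and which is indeed satisfied in the paper's only use of the corollary, Lemma \ref{auto}).
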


\section{Spectral Mapping Theorem}

\begin{lemma}\label{auto}
If $\phi$ is in $\ealgebra$ then
$$ \sigma \bra{\phi\sbra{H}} \subseteq \overline{Ran\bra{\phi}}$$
\end{lemma}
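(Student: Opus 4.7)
The plan is to prove the contrapositive: if $\lambda \in \C$ is not in $\overline{Ran(\phi)}$ then $\lambda \in \rho(\phi(H))$. The natural strategy is to construct an explicit two-sided inverse of $\phi(H) - \lambda I$ directly inside the enlarged algebra $\ealgebra$, using the material already developed in Section~3.

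First I would observe that $\lambda \notin \overline{Ran(\phi)}$ implies in particular $\phi(x) \neq \lambda$ for all $x \in \R$, and (by writing $\phi = (z,f)$ and noting that $\pi_{{}_{\cA,\phi}}(x) \to 0$ at infinity so $z \in \overline{Ran(\phi)}$) it also guarantees $\lambda - z \notin \overline{Ran(\pi_{{}_{\cA,\phi}})}$. This is exactly the hypothesis needed to apply Lemma \ref{lem:thisone} to $\phi - \lambda$, whence by Corollary \ref{inversecor} the element $(\phi - \lambda)^{-1}$ lies in $\ealgebra$.

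Next I would verify that the extended functional calculus is multiplicative on $\ealgebra$, so that the inverse in $\ealgebra$ transfers to an inverse at the operator level. For $\phi = (z,f)$ and $\psi = (w,g)$ in $\ealgebra$, the product in the algebra is $\phi \circ \psi = (zw, zg + wf + fg)$, so
$$(\phi \circ \psi)(H) = (fg)(H) + z\,g(H) + w\,f(H) + zw\,I,$$
while a direct expansion gives
$$\phi(H)\psi(H) = \bra{f(H) + zI}\bra{g(H) + wI} = f(H)g(H) + z\,g(H) + w\,f(H) + zw\,I.$$
These coincide by lemma \ref{e:important}(iv). Applying this identity to $\phi - \lambda$ and $(\phi - \lambda)^{-1}$, and using that $(\phi - \lambda)(H) = \phi(H) - \lambda I$ follows immediately from the definition of the extended calculus, yields
$$\bra{\phi(H) - \lambda I}\,(\phi - \lambda)^{-1}(H) = I = (\phi - \lambda)^{-1}(H)\,\bra{\phi(H) - \lambda I},$$
so $\phi(H) - \lambda I$ is invertible and $\lambda \in \rho(\phi(H))$.

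I do not anticipate a genuine obstacle here; the argument is essentially a bookkeeping exercise assembling Corollary \ref{inversecor} with the multiplicativity of lemma \ref{e:important}(iv). The only subtle point is the distinction between the pointwise condition $\phi(x) \neq \lambda$ and the stronger closure condition $\lambda \notin \overline{Ran(\phi)}$; as indicated above the latter is what actually feeds through Lemma \ref{lem:thisone}, and the decay of elements of $\cA$ at infinity makes the translation between the two conditions automatic.
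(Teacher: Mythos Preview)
Your proposal is correct and follows essentially the same route as the paper: take $\lambda\notin\overline{Ran(\phi)}$, invoke Corollary~\ref{inversecor} to obtain $(\phi-\lambda)^{-1}\in\ealgebra$, and conclude that $\phi(H)-\lambda$ is invertible. The paper's proof is two lines and leaves implicit both the multiplicativity of the extended calculus on $\ealgebra$ and the verification that the hypothesis of Lemma~\ref{lem:thisone} is met; you have simply spelled these out.
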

\begin{proof}
Given $\lam\in\C$ which is not in $\overline{Ran\bra{\phi}}$ we have by by corollary \ref{inversecor}
$$ \bra{\phi-\lambda}^{-1} \in \ealgebra$$
hence $\bra{\phi\bra{H}-\lam}^{-1}$ exists and is bounded and therefore  $\lam \not\in \sigma\bra{\phi\bra{H}}$.
\end{proof}

\begin{lemma}\label{unbounded}
If $\phi$ is in $\ealgebra$ then
$$\sigma\bra{\phi\bra{H}}\subseteq\phi\bra{\sigma\sbra{H}}\cup \{\pi_{{}_{\C,\phi}}\}$$ 
\end{lemma}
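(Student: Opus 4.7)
The plan is to prove the contrapositive: if $\lambda\in\C$ satisfies $\lambda\neq\pi_{{}_{\C,\phi}}$ and $\phi\sbra{x}\neq\lambda$ for every $x\in\sigma\sbra{H}$, then $\lambda\notin\sigma\bra{\phi\bra{H}}$. Writing $\phi=\bra{z,f}$ with $z=\pi_{{}_{\C,\phi}}$ and $f=\pi_{{}_{\cA,\phi}}$, set $\mu:=\lambda-z$. The condition $\lambda\neq\pi_{{}_{\C,\phi}}$ is exactly $\mu\neq 0$, while $\phi\sbra{x}\neq\lambda$ on $\sigma\bra{H}$ translates to $f\sbra{x}\neq\mu$ on $\sigma\bra{H}$, i.e.\ $A_\mu\cap\sigma\bra{H}=\emptyset$ in the notation of lemma \ref{gotcha}.

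The first step is to invoke lemma \ref{gotcha} with the non-zero point $\mu$ and the function $f\in\cA$: we obtain an $h\in\cA$ with $h\bra{H}=f\bra{H}$ and $h\sbra{x}\neq\mu$ for every $x\in\R$. Form the modified element $\psi:=\bra{z,h}\in\ealgebra$. By construction $\psi\bra{H}=h\bra{H}+zI=f\bra{H}+zI=\phi\bra{H}$, and $\psi\sbra{x}=z+h\sbra{x}\neq z+\mu=\lambda$ for every $x\in\R$.

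Next I would apply corollary \ref{inversecor} to $\psi$ with the scalar $\lambda$ to conclude that $\bra{\psi-\lambda}^{-1}\in\ealgebra$. Feeding this back through the extended functional calculus -- using the multiplicativity of the calculus on $\ealgebra$, which is immediate from the definition of the product $\circ$ on $\ealgebra$ together with lemma \ref{e:important}(iv) -- we obtain that $\bra{\phi\bra{H}-\lambda I}^{-1}$ exists as a bounded operator. Hence $\lambda\notin\sigma\bra{\phi\bra{H}}$.

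The crux of the argument is the observation that $\pi_{{}_{\C,\phi}}$ is the unavoidable scalar shift which cannot be absorbed by smoothly modifying $f$ on the complement of $\sigma\bra{H}$: lemma \ref{gotcha} applies only to non-zero target values, which is precisely why $\pi_{{}_{\C,\phi}}$ has to be thrown in as a possible extra point of the spectrum. I do not foresee a truly hard step; the only mild bookkeeping is verifying that multiplicativity genuinely transports from $\cA$ to $\ealgebra$ so that the inverse constructed inside $\ealgebra$ indeed gives the operator inverse of $\phi\bra{H}-\lambda I$.
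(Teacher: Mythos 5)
Your proposal is correct and follows essentially the same route as the paper: both reduce to the case $\lambda\neq\pi_{{}_{\C,\phi}}$, apply lemma \ref{gotcha} to the nonzero value $\lambda-\pi_{{}_{\C,\phi}}$ to replace $\pi_{{}_{\cA,\phi}}$ by an $h$ avoiding that value without changing the operator, and then invert $\bra{z,h}-\lambda$ in $\ealgebra$. The only cosmetic difference is that the paper cites lemma \ref{auto} (that $\sigma\bra{\theta\bra{H}}\subseteq\overline{Ran\bra{\theta}}$) at the last step, whereas you inline its proof via corollary \ref{inversecor} and multiplicativity.
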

\begin{proof}
Let $\lambda\in\C$ be such that  $\lambda\neq \pi_{{}_{\C,\phi}}$ and let $$A_\lambda = \{x : \phi\sbra{x}=\lambda\}$$
If $A_\lambda \cap \sigma\sbra{H} = \emptyset$ then by lemma \ref{gotcha} we have that there is function $h$ in $\algebra$ such that $$h\sbra{x}=\pi_{{}_{\algebra,\phi}}\sbra{x} \text{ for all }x \in \sigma\bra{H}$$ 
and $$h\sbra{x}\neq\lambda-\pi_{{}_{\C,\phi}} \text{ for all }x \in \R$$
moreover $$h\bra{H} = \pi_{{}_{\algebra,\phi}}\bra{H}$$
If $\theta := \bra{\pi_{{}_{\C,\phi}} , h}\in\ealgebra$ it follows from 
lemma \ref{e:important} we have $$\phi\bra{H}=\theta\bra{H}$$
Since $\lambda\notin \overline{Ran\bra{\theta}}$, the statement of the lemma follows from lemma \ref{auto}.
\end{proof}

\begin{lemma}\label{bbounded}
Let $\phi\in\ealgebra$. If $H$ is bounded and $$\{x : \phi\sbra{x}=\pi_{{}_{\C,\phi}}\}\cap \sigma\sbra{H} \, \text{ is empty }$$ 
then $\pi_{{}_{\C,\phi}}\notin\sigma\bra{\phi\bra{H}}$
\end{lemma}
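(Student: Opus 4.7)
My plan is to reduce the claim to showing that $\pi_{{}_{\algebra,\phi}}\bra{H}$ is boundedly invertible. By the definition of the extended calculus, $\phi\bra{H}-\pi_{{}_{\C,\phi}}I=\pi_{{}_{\algebra,\phi}}\bra{H}$, so writing $f:=\pi_{{}_{\algebra,\phi}}\in\cA$, the hypothesis becomes that $f$ is nonvanishing on $\sigma\bra{H}$ and the goal is to produce a bounded inverse for $f\bra{H}$. One cannot simply invoke lemma \ref{gotcha} with $\lambda=0$, because no function in $\cA$ is bounded away from zero on all of $\R$; boundedness of $H$ is what saves the situation, since the functional calculus only sees $f$ in a neighbourhood of the compact set $\sigma\bra{H}\subseteq[l,u]$.

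The construction I would carry out is as follows. By continuity of $f$ and compactness of $\sigma\bra{H}$, pick $\delta>0$ so that $f$ is nonvanishing on $[l-\delta,u+\delta]$, then choose $l'\in\bra{l-\delta,l}$, $u'\in\bra{u,u+\delta}$, and $\epsilon>0$ small enough that $[l'-\epsilon,u'+\epsilon]\subseteq[l-\delta,u+\delta]$. Set
$$g\bra{x} := \begin{cases} \charac{l'}{u'}{\epsilon}\bra{x}/f\bra{x}, & x\in[l'-\epsilon,u'+\epsilon],\\ 0, & \text{otherwise.}\end{cases}$$
Since $f$ does not vanish on the support of $\charac{l'}{u'}{\epsilon}$ and this cutoff is smooth with all derivatives vanishing at the endpoints of its support, $g$ is smooth of compact support and hence lies in $\cA$; moreover $fg=\charac{l'}{u'}{\epsilon}$ pointwise on $\R$.

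To finish, applying multiplicativity from lemma \ref{e:important}(iv) gives $f\bra{H}g\bra{H}=g\bra{H}f\bra{H}=\charac{l'}{u'}{\epsilon}\bra{H}$, and lemma \ref{oneforBO} identifies the right-hand side with $I$. Thus $g\bra{H}$ is a bounded two-sided inverse of $f\bra{H}=\phi\bra{H}-\pi_{{}_{\C,\phi}}I$, establishing $\pi_{{}_{\C,\phi}}\notin\sigma\bra{\phi\bra{H}}$. The main conceptual hurdle is really just recognising that \ref{gotcha} is unavailable at $\lambda=0$ and that the cutoff-based lemmas \ref{compactifyf} and \ref{oneforBO}, which themselves rely on boundedness, are the correct substitutes.
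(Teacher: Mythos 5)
There is a genuine gap at the very first step of your construction. You claim that ``by continuity of $f$ and compactness of $\sigma\bra{H}$'' one can pick $\delta>0$ so that $f:=\pi_{{}_{\cA,\phi}}$ is nonvanishing on the whole interval $[l-\delta,u+\delta]$. But the hypothesis only gives $f\neq 0$ on $\sigma\bra{H}$, and $\sigma\bra{H}$ is merely a compact subset of $[l,u]$ -- it can have gaps. Compactness and continuity give you nonvanishing on a neighbourhood of $\sigma\bra{H}$, not on the interval $[l-\delta,u+\delta]$. Concretely, if $\sigma\bra{H}=\{0,2\}$ and $f$ vanishes at $x=1$, no such $\delta$ exists, your function $g=\charac{l'}{u'}{\epsilon}/f$ is not defined, and the argument collapses. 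This is exactly the difficulty the paper's proof is organised around: it covers the zero set $\{x\in[l',u']:\pi_{{}_{\cA,\phi}}\sbra{x}=0\}$ by finitely many closed intervals $[a_i,b_i]$ disjoint from $\sigma\bra{H}$, applies Theorem \ref{deeep} on each to replace $\pi_{{}_{\cA,\phi}}$ by a function $f$ with no zeros in $[l',u']$, and notes via Lemma \ref{e:important}(iii) that this modification does not change the operator, since the difference has compact support disjoint from the spectrum.

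The remainder of your argument -- taking $g$ equal to $\charac{l'}{u'}{\epsilon}/f$, invoking multiplicativity, Lemma \ref{compactifyf} and Lemma \ref{oneforBO} to get $f\bra{H}g\bra{H}=\charac{l'}{u'}{\epsilon}\bra{H}=1$, hence a bounded two-sided inverse for $\phi\bra{H}-\pi_{{}_{\C,\phi}}I$ -- coincides with the paper's and is fine. You also correctly identified why Lemma \ref{gotcha} is unavailable at $\lambda=0$. To repair the proof you need only insert the Theorem \ref{deeep} step before defining $g$; as written, the proof is incomplete because it silently assumes $\sigma\bra{H}$ fills out $[l,u]$.
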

\begin{proof}
Let $ u := \sup \sigma\bra{H}$ and $l := \inf\sigma\bra{H} $.\\
Let $0<\epsilon\ll1$ such that $\pi_{{}_{\algebra,\phi}}$ is not zero on $\left[ l-\epsilon,l \right]$ and on $\left[ u,u+\epsilon \right]$.\\ 
Then let  $ u' := u+\epsilon$ and $l' := l-\epsilon$.\\
The set  $$\{x\in\left[l',u'\right] : \pi_{{}_{\cA,\phi}}\sbra{x}=0\}$$
can be covered by a finite number of disjoint intervals $\left[a_i,b_i\right]$ which are all disjoint from $\sigma\bra{H}$ and are all in $\left[l',u' \right]$.
Applying lemma \ref {deeep}  to each $\left[a_i,b_i\right]$ we can find a function $f\in \algebra$ such that 
$$\{x\in\left[l',u'\right] : f\sbra{x}=0\}=\emptyset$$ and $f=\pi_{{}_{\cA,\phi}}$ for all x in $\R/\left[l',u'\right]$.\\
Let $g$ be any function in $\algebra$ such that $g\bra{x}=\frac{1}{f\bra{x}}$ for all $x\in \left[l',u'\right]$\\ 
By lemma \ref{e:important}(iii) we have 
$$ \pi_{{}_{\cA,\phi}}\bra{H} g\bra{H} = f\bra{H} g\bra{H} $$
and by lemma \ref{compactifyf} we have 
$$ f\bra{H} g\bra{H} =  \sbra{fg\charac{l'}{u'}{\epsilon}}\sbra{H} = \charac{l'}{u'}{\epsilon}\sbra{H}$$
hence by lemma \ref{oneforBO} we have 
$$ \pi_{{}_{\cA,\phi}}\bra{H} g\bra{H} = 1 $$ and consequently $$ \bra{\pi_{{}_{\C,\phi}} - \phi\sbra{H}}\, g\sbra{H} = 1 $$ 
\end{proof}

\begin{theorem}
If $\phi$ in $\ealgebra$ then $\sigma\bra{\phi\bra{H}}\subseteq\overline{\phi\bra{\sigma\sbra{H}}}$
\end{theorem}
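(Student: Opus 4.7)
The plan is to extend Lemma \ref{unbounded} by absorbing the only point it leaves unresolved, namely $\pi_{{}_{C,\phi}}$. Since Lemma \ref{unbounded} already gives
$$\sigma(\phi(H)) \subseteq \phi(\sigma(H)) \cup \{\pi_{{}_{C,\phi}}\},$$
and $\phi(\sigma(H)) \subseteq \overline{\phi(\sigma(H))}$ trivially, it is enough to prove the contrapositive at the distinguished point: if $\pi_{{}_{C,\phi}} \notin \overline{\phi(\sigma(H))}$ then $\pi_{{}_{C,\phi}} \notin \sigma(\phi(H))$.

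I would then split according to whether $\sigma(H)$ is bounded. When $\sigma(H)$ is unbounded, pick $x_n \in \sigma(H)$ with $|x_n| \to \infty$. Every element of $\cA$ lies in some $S^\beta$ with $\beta<0$ and so decays at infinity; in particular $\pi_{{}_{\cA,\phi}}(x_n) \to 0$, so $\phi(x_n) = \pi_{{}_{C,\phi}} + \pi_{{}_{\cA,\phi}}(x_n) \to \pi_{{}_{C,\phi}}$. This forces $\pi_{{}_{C,\phi}} \in \overline{\phi(\sigma(H))}$ automatically, contradicting the contrapositive hypothesis, so this subcase is vacuous.

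When $\sigma(H)$ is bounded, the contrapositive hypothesis forces $\{x \in \sigma(H) : \pi_{{}_{\cA,\phi}}(x)=0\} = \{x \in \sigma(H) : \phi(x) = \pi_{{}_{C,\phi}}\}$ to be empty. A standard Riesz projection argument (integrating $(z-H)^{-1}$ over a rectangular contour at height $|\Im z| = \delta$ just inside $\rho(H)$, where the bound of hypothesis \ref{e:hypothesis} is uniform) shows that $H$ must itself be bounded, so Lemma \ref{bbounded} applies and yields $\pi_{{}_{C,\phi}} \notin \sigma(\phi(H))$, completing the argument.

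The main obstacle is precisely this bounded-spectrum subcase: Lemma \ref{bbounded} is stated only for bounded $H$, so one must either establish bounded-spectrum-implies-bounded-operator under hypothesis \ref{e:hypothesis}, or verify that the proof of Lemma \ref{bbounded} (which relies on Lemmas \ref{oneforBO}, \ref{compactifyf}, and \ref{e:important}(iii)) still goes through when $H$ is unbounded but $\sigma(H)$ is bounded. The contour integrals in those lemmas stay safely inside $\rho(H)$ where the resolvent is bounded, so the extension should be routine; once that technicality is dispatched, the theorem falls out as an immediate consequence of Lemmas \ref{unbounded} and \ref{bbounded}.
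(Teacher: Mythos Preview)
Your proposal is correct and follows essentially the same route as the paper. The paper splits directly on whether $H$ is bounded or unbounded (rather than on whether $\sigma(H)$ is bounded), which lets it invoke Lemma~\ref{bbounded} without the extra Riesz-projection step; but since under Hypothesis~\ref{e:hypothesis} boundedness of $H$ and boundedness of $\sigma(H)$ are equivalent (exactly the standard fact you sketch), the two decompositions are interchangeable and the argument is the same combination of Lemmas~\ref{unbounded} and~\ref{bbounded}.
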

\begin{proof}
If $H$ is unbounded then $\overline{\phi\bra{\sigma\sbra{H}}} = \phi\bra{\sigma\sbra{H}}\cup \{\pi_{{}_{\C,\phi}}\} $
and the theorem follows from lemma \ref{unbounded}. If $H$ is bounded and there is an $x\in \sigma\bra{H}$ such that 
$ \phi\bra{x}= \pi_{{}_{\C,\phi}}$ then $\overline{\phi\bra{\sigma\sbra{H}}} = \phi\bra{\sigma\sbra{H}}\cup \{\pi_{{}_{\C,\phi}}\} $ 
and again the theorem follows from \ref{unbounded}. If $H$ is bounded and 
$$ \phi\bra{x}\neq\pi_{{}_{\C,\phi}}$$  for all $x\in \sigma\bra{H}$  then $\overline{\phi\bra{\sigma\sbra{H}}} = \phi\bra{\sigma\sbra{H}}$ 
by lemmas \ref{unbounded} and \ref{bbounded}.
\end{proof}

\begin{lemma}\label{e:third}
Given $s\in\R$ and a function $f\in \algebra$, let $k_s \bra{x}:=\bra{1,-\frac{s+i}{x+i}}\in \ealgebra$ and let the function $g_s$ be defined as in lemma \ref{e:second} then
 $$\bra{f\bra{H}-f\bra{s}}\bra{H+i}^{-1} = g_s\bra{H}k_s \bra{H}$$
\end{lemma}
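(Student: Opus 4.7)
The plan is to recognize both sides as the image of a single function $m\in\cA$ under the functional calculus, and then to evaluate $m(H)$ in two complementary ways. First, unpacking the definition of $k_s\in\ealgebra$, we have $\pi_{{}_{C,k_s}}=1$ and $\pi_{{}_{\cA,k_s}}\bra{x}=-(s+i)/(x+i)$, so the extended calculus gives
\begin{equation*}
k_s(H) \;=\; I - (s+i)(H+i)^{-1} \;=\; (H-s)(H+i)^{-1}.
\end{equation*}
Setting $r(x):=1/(x+i)\in S^{-1}\subset\cA$, lemma \ref{e:important}(v) applied with $z=-i$ identifies $r(H)=(H+i)^{-1}$, and a direct pointwise simplification (continuous at $x=s$ since both sides vanish there) yields
\begin{equation*}
m(x) \;:=\; g_s(x)\,k_s(x) \;=\; \frac{f(x)-f(s)}{x-s}\cdot\frac{x-s}{x+i} \;=\; f(x) r(x) - f(s) r(x),
\end{equation*}
with $fr\in\cA$ by lemma \ref{lem:stable}; in particular $m\in\cA$.

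Next I would compute $m(H)$ directly from the decomposition $m = fr - f(s) r$. Using linearity, the multiplicativity of lemma \ref{e:important}(iv) applied to $f$ and $r$, and the identification of $r(H)$ above,
\begin{equation*}
m(H) \;=\; f(H) r(H) - f(s)\, r(H) \;=\; \bigl(f(H) - f(s) I\bigr)(H+i)^{-1}.
\end{equation*}
In parallel, I would read $m$ as the $\cA$-part of the product $g_s\circ k_s$ inside $\ealgebra$: the constant part is $0\cdot 1 = 0$, and expanding the $\circ$-rule one recovers exactly $m$. It then remains to verify that the extended calculus is multiplicative, i.e.\ $(g_s\circ k_s)(H) = g_s(H) k_s(H)$. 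Expanding $\phi(H)=\pi_{{}_{\cA,\phi}}(H)+\pi_{{}_{C,\phi}} I$ for each of the three elements and invoking the $\cA$-level multiplicativity of lemma \ref{e:important}(iv) reduces this to a short algebraic identity. Equating the two evaluations of $m(H)$ then completes the proof.

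The only real obstacle is keeping straight in which algebra each product is taken: $g_s\cdot k_s$ is formed in $\ealgebra$, while $f\cdot r$ and the scalar multiple $f(s)\cdot r$ sit entirely within $\cA$. Once one notes that the constant part of $g_s\circ k_s$ cancels, so that $m$ is genuinely an element of $\cA$, the computation reduces to two applications of the $\cA$-level multiplicativity together with the resolvent identification of lemma \ref{e:important}(v). The removable singularity at $x=s$ requires no separate treatment, as both representations of $m$ vanish there by inspection.
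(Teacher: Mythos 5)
Your proof is correct and is essentially the paper's own argument: the paper's one-line proof rests on exactly the pointwise identity $\bra{-f\bra{s},f}\circ\bra{0,\bra{x+i}^{-1}}=\bra{0,g_s}\circ\bra{1,-\tfrac{s+i}{x+i}}$ in $\ealgebra$ followed by an appeal to multiplicativity of the (extended) calculus, which is what you spell out with your function $m$. The only detail worth a glance is the sign in identifying $r\bra{H}=\bra{H+i}^{-1}$ via lemma \ref{e:important}(v), since $g_{-i}\bra{x}=\bra{-i-x}^{-1}=-r\bra{x}$, but this is immediate.
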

\begin{proof}
This statement follows directly from the functional calculus and the observation
$$\bra{-f\bra{s},f\bra{x}}\bra{0,\bra{x+i}^{-1}}=\bra{0,\frac{f\bra{x}-f\bra{s}}{x-s}}
\bra{1,-\frac{s+i}{x+i}}$$ 
\end{proof}

\begin{theorem}
Let $f$ be a function in $\algebra$ then 
$$ \overline{f\bra{\sigma\bra{H}}} \subseteq \sigma\bra{f\bra{H}}$$
\end{theorem}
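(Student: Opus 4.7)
The plan is to show that for every $s \in \sigma\bra{H}$ the point $f\bra{s}$ lies in $\sigma\bra{f\bra{H}}$; taking closures then yields the stated inclusion since $\sigma\bra{f\bra{H}}$ is closed. The method is the approximate-eigenvector approach flagged by the second bullet of the introduction, combined with Lemma \ref{e:third} as the algebraic bridge from $H$ to $f\bra{H}$.

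Fix $s \in \sigma\bra{H}$. Because $\sigma\bra{H} \subseteq \R$ by Hypothesis \ref{e:hypothesis}, the set $\sigma\bra{H}$ has empty interior in $\C$ and so coincides with its topological boundary in $\C$. Hence for each $\epsilon > 0$ there is a unit vector $v_\epsilon \in \Dom\bra{H}$ with $\norm{\bra{H - s}v_\epsilon} < \epsilon$. From Lemma \ref{e:third} we have $\bra{f\bra{H} - f\bra{s}}\bra{H+i}^{-1} = g_s\bra{H}\,k_s\bra{H}$, and from the definition $k_s = \bra{1, -\bra{s+i}/\bra{x+i}}\in\ealgebra$ we read off $k_s\bra{H} = I - \bra{s+i}\bra{H+i}^{-1}$, so a direct check gives $k_s\bra{H}\bra{H+i}v = \bra{H-s}v$ for every $v \in \Dom\bra{H}$. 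Evaluating Lemma \ref{e:third} at the vector $\bra{H+i}v_\epsilon$ therefore yields
\[
\bra{f\bra{H} - f\bra{s}}\,v_\epsilon \;=\; g_s\bra{H}\bra{H - s}v_\epsilon,
\]
which has norm at most $\norm{g_s\bra{H}}\,\epsilon$. Since $\norm{v_\epsilon} = 1$ and the right-hand side can be made arbitrarily small, $f\bra{H} - f\bra{s}$ cannot be boundedly invertible, so $f\bra{s} \in \sigma\bra{f\bra{H}}$.

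The single delicate point is the operator identity $k_s\bra{H}\bra{H+i} = H - s$ on $\Dom\bra{H}$, which is the only place where the bounded functional calculus must mesh cleanly with the possibly unbounded operator $H$. Once this is in hand, the remaining steps are free: the key estimate follows from the approximate-eigenvector property together with the boundedness of $g_s\bra{H}$ guaranteed by Lemma \ref{e:important}, and closedness of $\sigma\bra{f\bra{H}}$ then delivers $\overline{f\bra{\sigma\bra{H}}} \subseteq \sigma\bra{f\bra{H}}$ from the pointwise inclusion $f\bra{\sigma\bra{H}} \subseteq \sigma\bra{f\bra{H}}$.
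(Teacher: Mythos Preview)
Your proof is correct and follows essentially the same route as the paper: the paper likewise uses the identity $k_s\bra{H}\bra{H+i}=H-s$ on $\Dom\bra{H}$ together with Lemma \ref{e:third} to pass from $\bra{H-s}v_m\to 0$ to $\bra{f\bra{H}-f\bra{s}}v_m\to 0$, and then closes up using the closedness of $\sigma\bra{f\bra{H}}$. Your write-up is in fact slightly more explicit than the paper's, since you spell out why every $s\in\sigma\bra{H}$ is a boundary point of the spectrum and hence admits approximate eigenvectors.
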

\begin{proof} 
We observe the identity 
\begin{equation}\label{loid}
 H-x= \bra{H+i}-\bra{x+i} =\bra{1-\bra{x+i}\bra{H+i}^{-1}}\bra{H+i} = k_x \bra{H}\bra{H+i}
\end{equation} for some $x\in\R$.\\
Let $s\in\R$. Suppose there is a sequence of unit length vectors $\{v_m\}\subset Dom\bra{H}$  such that
$\lim\limits_{m\rightarrow\infty}\bra{H-s}v_m = 0$.
Using identity (\ref{loid} ) we have $\lim\limits_{m\rightarrow\infty}g_s\bra{H}k_s\bra{H}\bra{H+i}v_m=0$.
By applying lemma \ref{e:third} we can conclude that $\lim\limits_{m\rightarrow\infty}\bra{f\bra{H}-f\bra{s}}v_m=0$.\\
The accumulation points of $f\bra{\sigma\bra{H}}$ are in $\sigma\bra{f\bra{H}}$ since the latter is closed.
\end{proof}
\begin{corollary}
Let $\phi$ be a function $\ealgebra$ then 
$$ \overline{\phi\bra{\sigma\bra{H}}} \subseteq \sigma\bra{\phi\bra{H}}$$
\end{corollary}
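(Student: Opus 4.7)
The plan is to reduce the corollary to the previous theorem by exploiting the simple shift structure of $\ealgebra$. Write $\phi = (z,f) \in \ealgebra$ with $z = \pi_{{}_{\C,\phi}}$ and $f = \pi_{{}_{\cA,\phi}} \in \algebra$, so that by definition $\phi(x) = z + f(x)$ for $x \in \R$ and $\phi(H) = f(H) + zI$ by the extended functional calculus. The central observation is that spectra and ranges behave well under translation by a scalar.

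First I would establish the identity
$$ \phi(H) - \mu I = f(H) - (\mu - z) I $$
for every $\mu \in \C$, together with its pointwise counterpart $\phi(x) - \mu = f(x) - (\mu - z)$ for $x \in \R$. These immediately yield $\sigma(\phi(H)) = z + \sigma(f(H))$ and $\phi(\sigma(H)) = z + f(\sigma(H))$. Taking closures commutes with the translation $w \mapsto z + w$, so $\overline{\phi(\sigma(H))} = z + \overline{f(\sigma(H))}$.

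Next I would invoke the preceding theorem, which asserts $\overline{f(\sigma(H))} \subseteq \sigma(f(H))$ for every $f \in \algebra$. Adding $z$ to both sides gives
$$ \overline{\phi(\sigma(H))} \;=\; z + \overline{f(\sigma(H))} \;\subseteq\; z + \sigma(f(H)) \;=\; \sigma(\phi(H)), $$
which is the desired conclusion.

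There is really no obstacle here: the work is already done at the level of $\algebra$, and the extension to $\ealgebra$ amounts to recognising that adjoining multiples of the identity merely shifts both the spectrum of the operator and the range of the symbol by the same scalar. The only thing worth being careful about is that the extended calculus satisfies $\phi(H) - \mu I = (\phi - \mu)(H)$ in the sense of $\ealgebra$-arithmetic, which is immediate from the definition of $\phi(H)$ and the linearity baked into the pair notation.
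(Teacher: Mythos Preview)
Your argument is correct and is exactly the immediate reduction the paper has in mind: the corollary is stated without proof because passing from $f\in\cA$ to $\phi=(z,f)\in\ealgebra$ just shifts both $\sigma(f(H))$ and $f(\sigma(H))$ by the scalar $z$, which is precisely what you wrote out. There is nothing to add.
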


\section{Self-Adjoint Operators}
We now assume that $H$ is self-adjoint and $\cB$ is a Hilbert space.\\
The following theorem of Davies extends the Helffer-Sj\"ostrand functional calculus to $C_0\bra{\R}$ for self-adjoint operators.
\begin{theorem}[Davies\cite{Da1} Theorem 9]\label{safc}
The functional calculus may be extended to a map from $f\in C_0\bra{\R}$ to 
$f\bra{H}\in\cL\bra{\cB}$ with the follwoing properties:
\begin{enumerate}
\item $f\rightarrow f\bra{H}$ is an algebra homomorphism.
\item $\overline{f}\bra{H} = f\bra{H}^*$
\item $\norm{f\bra{H}}\leq \norm{f}_\infty$
\item If $z \not\in \R$ and $g_z\bra{x} := \bra{z-x}^{-1}$ for all $x \in \R$ then $g_z\bra{H}=\bra{z-H}^{-1}$
\end{enumerate}
Moreover the functional calculus is unique subject to these conditions.
\end{theorem}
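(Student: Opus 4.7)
The plan is to build on the functional calculus already defined on $\cA$ (which contains $\kiko{\R}$) and extend it by continuity to $C_0(\R)$. The work splits into: establishing properties (i)--(iv) on $\kiko{\R}$, proving the sup-norm bound (iii) using self-adjointness, extending by density, and checking uniqueness.

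For $f \in \kiko{\R} \subset \cA$, property (i) is immediate from Lemma \ref{e:important}(iv) and property (iv) from Lemma \ref{e:important}(v). To obtain property (ii) I would take the adjoint of the Helffer-Sj\"ostrand integral \eqref{e:hsD}. Since $H$ is self-adjoint, $((z-H)^{-1})^{*} = (\bar z - H)^{-1}$; performing the change of variable $z \mapsto \bar z$ in the integral and noting that the cutoff $\tau(x,y)$ in Definition \ref{almostanalytic} is symmetric under $y \mapsto -y$, the almost analytic extension $\tilde f$ is replaced by $\overline{\tilde f(\bar z)}$, which one recognises as an almost analytic extension of $\bar f$, while $\partial/\partial \bar z$ transforms correctly once the orientation reversal is tracked. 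This yields $\bar f(H) = f(H)^{*}$.

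The sup-norm bound (iii) is the central step. By (i) and (ii) we have $f(H)^{*} f(H) = (|f|^{2})(H)$, which is self-adjoint. The $C^{*}$-identity gives $\norm{f(H)}^{2} = \norm{(|f|^{2})(H)}$, and for a self-adjoint operator the operator norm equals the spectral radius. Combining the two spectral mapping inclusions proved in Section 4 yields $\sigma((|f|^{2})(H)) = \overline{|f|^{2}(\sigma(H))}$, whence
\[
\norm{f(H)}^{2} \leq \sup_{x \in \sigma(H)} |f(x)|^{2} \leq \norm{f}_{\infty}^{2}.
\]
With this in hand the extension is routine: given $f \in C_0(\R)$, mollify and truncate to obtain $f_n \in \kiko{\R}$ converging uniformly to $f$; by (iii) the sequence $\{f_n(H)\}$ is Cauchy in $\cL(\cB)$; define $f(H)$ as its limit and check independence of the approximating sequence. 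Properties (i)--(iv) pass to the limit by continuity of multiplication, involution, and norm.

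Uniqueness reduces to the observation that any map satisfying (i)--(iv) is determined by its values on the resolvents $\{g_z : z \notin \R\}$. The $*$-subalgebra of $C_0(\R)$ generated by these resolvents is closed under complex conjugation, separates points, and vanishes nowhere, so by the Stone-Weierstrass theorem it is uniformly dense in $C_0(\R)$; property (iii) then forces agreement on all of $C_0(\R)$. I expect the main obstacle to be the sup-norm bound on $\kiko{\R}$, which requires assembling the $C^{*}$-identity, the normality of $f(H)$ inherited from property (ii), and the full spectral mapping identity in the correct order. A secondary technical nuisance is the bookkeeping in the adjoint step, where the change of variable $z \mapsto \bar z$ reverses orientation and must be reconciled with the sign in $\partial/\partial \bar z$.
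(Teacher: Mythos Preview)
The paper does not give its own proof of this statement: it is quoted as Theorem 9 of Davies \cite{Da1} and used as an external input to Section 5. There is therefore nothing in the present paper to compare your argument against.

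Your sketch is correct as it stands. The one point worth noting is that your derivation of the sup-norm bound (iii) invokes the spectral mapping inclusion $\sigma\bra{|f|^{2}(H)} \subseteq \overline{|f|^{2}\bra{\sigma(H)}}$ from Section 4. That is logically legitimate here, since Section 4 makes no use of Theorem \ref{safc}, but it is certainly not the route Davies took in \cite{Da1}, which predates any spectral mapping theorem for this calculus. The standard direct argument (as in \cite{Da1,Da2}) avoids spectral mapping altogether: for $f \in \kiko{\R}$ and any $c > \norm{f}_\infty$, the function $h := \sqrt{c^{2} - |f|^{2}} - c$ lies in $\kiko{\R}$, is real-valued, and satisfies $c^{2} - |f|^{2} = (c+h)^{2}$ pointwise; applying the homomorphism property and (ii) one obtains $c^{2}I - f(H)^{*}f(H) = \bra{cI + h(H)}^{2} \geq 0$ as an operator inequality on the Hilbert space, whence $\norm{f(H)} \leq c$. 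Your approach trades this explicit square-root construction for the spectral mapping machinery --- cleaner once that machinery is in hand, but not the historical path, and in fact only the inclusion of Theorem 4.4 (not the full identity) is needed.
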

\begin{lemma}
If $f\in C_0\bra{\R}$ then $$\overline{f\bra{\sigma\bra{H}}}\subseteq \sigma\bra{f\bra{H}}$$
\end{lemma}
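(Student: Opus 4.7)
The plan is to reduce the statement for $f\in C_0\bra{\R}$ to the case $f\in\algebra$ that was settled in the preceding theorem, the bridge being the uniform bound $\norm{f\bra{H}}\leq \norm{f}_\infty$ supplied by theorem \ref{safc}(iii) together with the density of $\kiko{\R}\subseteq\algebra$ in $C_0\bra{\R}$ under the sup norm.

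First, fix $s\in\sigma\bra{H}$. Since $H$ is self-adjoint, $\sigma\bra{H}\subseteq\R$, so each spectral point is automatically a topological boundary point of $\sigma\bra{H}$ regarded as a subset of $\C$. The observation recorded in the introduction then furnishes a sequence of unit vectors $v_m\in\Dom\bra{H}$ with $\bra{H-s}v_m\to 0$. I would next pick a sequence $f_n\in\kiko{\R}\subseteq\algebra$ with $\norm{f-f_n}_\infty\to 0$; the preceding theorem, applied to each $f_n$, delivers $\bra{f_n\bra{H}-f_n\bra{s}}v_m\to 0$ as $m\to\infty$.

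The remainder is a standard diagonal argument based on the estimate
$$\norm{\bra{f\bra{H}-f\bra{s}}v_m}\leq \norm{f-f_n}_\infty+\norm{\bra{f_n\bra{H}-f_n\bra{s}}v_m}+|f_n\bra{s}-f\bra{s}|,$$
where the first and third terms are each bounded by $\norm{f-f_n}_\infty$ and hence can be made arbitrarily small by choosing $n$ large, while the middle term vanishes as $m\to\infty$ for each fixed $n$. Extracting a diagonal sequence of unit vectors $u_k$ with $\bra{f\bra{H}-f\bra{s}}u_k\to 0$ shows that $f\bra{H}-f\bra{s}$ cannot be bounded below, hence $f\bra{s}\in\sigma\bra{f\bra{H}}$. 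Since $\sigma\bra{f\bra{H}}$ is closed, taking closures gives the desired inclusion $\overline{f\bra{\sigma\bra{H}}}\subseteq\sigma\bra{f\bra{H}}$.

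I anticipate no serious obstacle; the only slightly delicate point is that the approximate-eigenvector argument used in the $\algebra$ case should transfer cleanly through the uniform approximation, and this is exactly what the norm bound in theorem \ref{safc}(iii) is designed to guarantee.
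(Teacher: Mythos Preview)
Your proof is correct and follows essentially the same route as the paper: approximate $f\in C_0\bra{\R}$ uniformly by functions in $\algebra$, invoke the $\algebra$ case on the approximants, and control the error via the bound $\norm{g\bra{H}}\leq\norm{g}_\infty$ from theorem \ref{safc}(iii). The paper's version is terser (it cites Stone--Weierstrass on resolvent sums for the density rather than $\kiko{\R}$, and it leaves the approximate-eigenvector step implicit), and your diagonal extraction is not actually needed since the original sequence $v_m$ already satisfies $\bra{f\bra{H}-f\bra{s}}v_m\to 0$, but these are cosmetic differences.
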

\begin{proof}
This is a consequence of the density of $\algebra$ in $C_0\bra{\R}$.
By the Stone-Weierstrass theorem the linear subspace $$\{\sum\limits_{i=1}^{n} \tfrac{\lambda_i}{x-\omega_i} : \lambda\in\C \quad \omega_i\notin\R\}$$ 
is dense in $C_0\bra{\R}$.
If $f_\epsilon\in\algebra$ is close to $f$ and if $v\in\cB$ is of length 1 then 
$$ \norm{f\bra{H}v - f\bra{s}v}\leq \norm{f\bra{H}-f_\epsilon\bra{H}} + \norm{f_\epsilon\bra{H}v-f_\epsilon\bra{s}v} + \norm{f_\epsilon-f}_{\infty} $$
The statement then follows from lemma \ref{safc}(iii)
\end{proof}
\begin{lemma}
If $f\in C_0\bra{\R}$ then $$\sigma\bra{f\bra{H}} \subseteq \overline{f\bra{\sigma\bra{H}}}$$
\end{lemma}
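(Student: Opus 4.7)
The plan is to take $\mu\notin\overline{f(\sigma(H))}$ and prove $f(H)-\mu I$ is invertible by approximating $f$ in sup-norm by elements of $\algebra$, applying the spectral mapping theorem already proved for $\ealgebra$ to the approximants, and then closing with a Neumann series argument.  The crucial quantitative ingredient is a uniform bound on the resolvent of the approximants, which will come from the normality of $f_\epsilon(H)$ (itself a consequence of the $*$-homomorphism property of the self-adjoint functional calculus).

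Fix $\mu\notin\overline{f(\sigma(H))}$ and pick $\delta>0$ with $|f(x)-\mu|\geq 2\delta$ on $\sigma(H)$.  Since $f\in C_0(\R)$, if $\sigma(H)$ is unbounded then $0\in\overline{f(\sigma(H))}$ and hence $\mu\neq 0$; in the bounded case no constraint on $\mu$ is needed.  Using the same density invoked in the preceding lemma, choose $f_\epsilon\in\algebra$ with $\|f-f_\epsilon\|_\infty<\delta$ and set $\phi_\epsilon:=(-\mu,f_\epsilon)\in\ealgebra$, so that $\phi_\epsilon(H)=f_\epsilon(H)-\mu I$.  On $\sigma(H)$ we have $|\phi_\epsilon(x)|=|f_\epsilon(x)-\mu|\geq\delta$, while at infinity $\phi_\epsilon(x)\to-\mu$; therefore $\overline{\phi_\epsilon(\sigma(H))}$ is bounded away from $0$ by $\eta:=\min(\delta,|\mu|)>0$ (take $\eta=\delta$ in the bounded case).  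The spectral mapping theorem for $\ealgebra$ then delivers $\dist\bra{0,\sigma(f_\epsilon(H)-\mu I)}\geq\eta$.

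Because $f_\epsilon\in\algebra\subset C_0(\R)$, theorem \ref{safc}(ii) gives $\overline{f_\epsilon}(H)=f_\epsilon(H)^*$, and then the algebra-homomorphism property forces
$$f_\epsilon(H)\,f_\epsilon(H)^*=(|f_\epsilon|^2)(H)=f_\epsilon(H)^*\,f_\epsilon(H),$$
so $f_\epsilon(H)$, hence $f_\epsilon(H)-\mu I$ and its inverse, is normal.  Since for a bounded normal operator on a Hilbert space the norm equals the spectral radius,
$$\|(f_\epsilon(H)-\mu I)^{-1}\|=\frac{1}{\dist\bra{0,\sigma(f_\epsilon(H)-\mu I)}}\leq\frac{1}{\eta},$$
uniformly in $\epsilon$.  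Combining this with $\|f(H)-f_\epsilon(H)\|\leq\|f-f_\epsilon\|_\infty$ from theorem \ref{safc}(iii) and choosing $\epsilon<\eta/2$, the Neumann series applied to
$$f(H)-\mu I=(f_\epsilon(H)-\mu I)\bigl(I+(f_\epsilon(H)-\mu I)^{-1}(f(H)-f_\epsilon(H))\bigr)$$
yields invertibility of $f(H)-\mu I$, whence $\mu\notin\sigma(f(H))$.  The main obstacle is precisely this uniform resolvent bound: spectral gap alone does not control the norm of an inverse for a non-normal operator, so exploiting the normality of each $f_\epsilon(H)$ is what makes the perturbation step go through.
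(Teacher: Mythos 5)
Your proposal is correct and follows essentially the same route as the paper: approximate $f$ in sup-norm by elements of $\algebra$ (the paper uses the partial sums $f_n$ of the rational series from the Stone--Weierstrass step), apply the already-proved spectral mapping theorem to the approximant to push its spectrum away from $\mu$, and conclude by a Neumann-series perturbation. In fact you supply two details the paper's proof leaves implicit --- the uniform bound $\norm{(f_\epsilon(H)-\mu I)^{-1}}\leq \eta^{-1}$ via normality of $f_\epsilon(H)$, and the treatment of the limit value $-\mu$ at infinity contributed by the closure in the $\ealgebra$ spectral mapping theorem --- so your write-up is, if anything, the more complete of the two.
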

\begin{proof}
Let $$f :=  \sum\limits_{i=1}^{\infty} \tfrac{\lambda_i}{x-\omega_i} \quad \text{ and } \quad f_n := \sum\limits_{i=1}^{n} \tfrac{\lambda_i}{x-\omega_i} $$
Suppose $\lambda\in \C$ is not in the closure of $f\bra{\sigma\bra{H}}$. Then there is $\delta>0$ such that $$\inf\limits_{s\in\sigma\bra{H}} |f\bra{s}-\lambda|=\delta$$
Also for all large enough $n$ we have $\norm{f_n-f}_\infty <\tfrac{\delta}{2}$.  Then from $$ |f\bra{s} -f_n\bra{s} +f_n\bra{s}-\lambda| >\delta$$
we can deduce that $$ |f_n\bra{s}-\lambda| >\delta - \norm{f_n-f}_\infty $$ hence
$$\inf\limits_{s\in\sigma\bra{H}} |f_n\bra{s}-\lambda|>\tfrac{\delta}{2}$$
and $\lambda\notin\sigma\bra{f_n\bra{H}}$.\\
From the identity $$\norm{\bra{f\bra{H}-\lambda}\bra{f_n\bra{H}-\lambda}^{-1}-1} = \norm{\bra{f\bra{H}-f_n\bra{H}}\bra{f_n\bra{H}-\lambda}^{-1}} $$
we can deduce that $\lambda\notin\sigma\bra{f\bra{H}}$.
\end{proof}

\section{Functional Calculus for Semi-Bounded Operators}
We modify our main hypothesis (\ref{e:hypothesis}) by assuming that the spectrum of $H$ is bounded below and without 
loss of generality $\sigma\bra{H} \subseteq [0,\infty)$.\\ 
We introduce a new ring
of functions $\cA^+$
\begin{definition} 
$S^{\beta}_+$ is the set of smooth functions on $\R^+\cup\{0\}$ with the same decaying property
as $S^\beta$ that is for every $n$ there is positive constant $c_n$ such that $$|\frac{d^nf}{dx^n}|\,
\leq\, c_n \sx^{\beta-n}$$
Then $\cA^+$ is defined appropriately and similarly we define the Banach space $\cA^+_n$ with norm
\begin{equation}
\norm{f}_{\cA^+_n} := \sum\limits_{r=0}^{n}\int\limits_0^{\infty} |\frac{d^{r}f}{dx^r}|\sx^{r-1} dx
\end{equation}
\end{definition}

We present a theorem due to Seeley \cite{se} which gives a linear extension operator
for smooth functions from the half space to the whole space. 
\begin{theorem}[Seeley's Extension Theorem]
There is a linear extension operator
$$ \cE : C^{\infty}[0,\infty) \longrightarrow C^{\infty}\bra{\R}$$
such that for all $x>0$
$$\bra{\cE f}\bra{x}=f\bra{x} $$
\end{theorem}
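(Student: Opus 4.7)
The plan is to construct $\cE$ explicitly. I set $\bra{\cE f}\bra{x}=f\bra{x}$ for $x\geq 0$, and on the negative half-line I try Seeley's ansatz
$$\bra{\cE f}\bra{x} \;=\; \sum_{n=1}^{\infty} a_n\,\phi\bra{b_n x}\,f\bra{-b_n x}, \qquad x<0,$$
where $\phi\in\kiko{\R}$ is a fixed bump with $\phi\equiv 1$ on a neighbourhood of $0$, and $(a_n)\subset\R$, $(b_n)\subset(0,\infty)$ with $b_n\to\infty$ are sequences yet to be chosen. The formula is manifestly linear in $f$, so linearity of $\cE$ is automatic once the construction is legitimate; the real work is to select $(a_n),(b_n)$ so that $\cE f$ extends smoothly across the origin for every $f\in C^\infty[0,\infty)$.

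For each fixed $x<0$ only finitely many terms of the series are nonzero (namely those $n$ with $b_n x\in\supp\phi$, a finite set since $b_n|x|\to\infty$), so $\cE f$ is automatically $C^\infty$ on $\bra{-\infty,0}$. To analyse its behaviour at $0^-$ I note that $\phi^{\bra{j}}\bra{0}=0$ for $j\geq 1$ and $\phi\bra{0}=1$, so formal differentiation of the series at $x=0$ collapses to $\bra{\cE f}^{\bra{k}}\bra{0^-}=f^{\bra{k}}\bra{0}\sum_n a_n\bra{-b_n}^k$. Matching one-sided derivatives at $0$ for every $f$ therefore forces the \emph{moment conditions}
$$\sum_{n=1}^{\infty} a_n\bra{-b_n}^k = 1 \qquad \text{for every } k\geq 0,$$
and legitimising the passage to the limit $x\to 0^-$ (so that $\cE f\in C^\infty\bra{\R}$ and not merely $C^\infty$ off $0$) requires the termwise derivative series to be dominated summably near $0$, which is achieved by imposing the \emph{growth conditions}
$$\sum_{n=1}^{\infty} |a_n|\,b_n^k < \infty \qquad \text{for every } k\geq 0.$$

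The main obstacle is the joint realization of these two families of conditions. I would fix a rapidly-growing sequence, for instance $b_n=2^{2^n}$, and determine $(a_n)$ by truncated Vandermonde systems: the $N\times N$ matrix $V_N=\bigl(\bra{-b_i}^{j-1}\bigr)_{1\leq i,j\leq N}$ is invertible, so there is a unique tuple $\bra{a_1^{\bra{N}},\dots,a_N^{\bra{N}}}$ solving the first $N$ moment equations. Cramer's rule, combined with the enormous gaps between successive $b_n$, should produce bounds showing that $a_n^{\bra{N}}$ stabilises as $N\to\infty$ to coefficients $a_n$ decaying fast enough that $\sum_n |a_n|\,b_n^k<\infty$ for each fixed $k$; the moment identities then pass to the limit. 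An alternative, more conceptual route is a Mittag-Leffler / partial-fractions construction producing $(a_n)$ directly as residues of a meromorphic object whose Taylor data encodes the constant sequence $1,1,1,\dots$.
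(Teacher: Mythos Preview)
Your proposal is correct and follows essentially the same route as the paper. The paper writes the identical ansatz (with the sign convention $b_k<0$ and $f(b_k x)$ in place of your $b_n>0$ and $f(-b_n x)$), and it isolates your moment and growth conditions as a separately stated lemma attributed to Seeley, which it cites rather than proves; your Vandermonde/Cramer sketch is an outline of that cited lemma, not an alternative to the paper's argument.
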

The extension operator is continuous for many topologies including uniform convergence of each derivative.
The proof of the theorem relies on the following lemma.
\begin{lemma}[\cite{se}]\label{lem:seeley}
There are sequences $\{a_k\},\, \{b_k\}$ such that
\begin{enumerate}
\item $b_k <0$
\item $\sum\limits_{k=0}^{\infty}|a_k||b_k|^n<\infty$\label{p:two}
 for all non-negative integers $n$ \label{p:two}
\item $\sum\limits_{k=0}^{\infty}a_k \bra{b_k}^n =1$ for all non-negative integers $n$
\item $b_k \rightarrow -\infty$
\end{enumerate}
\end{lemma}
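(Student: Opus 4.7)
The plan is to take $b_k := -2^k$ for $k \geq 0$, so that (i) and (iv) hold immediately, and to construct the coefficients $a_k$ by solving truncated Vandermonde systems and passing to the limit.

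For each $N \geq 1$, consider the finite system
$$\sum_{k=0}^{N-1} a_k^{(N)} b_k^n = 1, \qquad n = 0, 1, \ldots, N-1.$$
Since the Vandermonde matrix $V_{nk} = b_k^n$ is invertible with inverse given by the coefficients of the Lagrange interpolation basis, the unique solution is
$$a_k^{(N)} = L_k^{(N)}(1) = \prod_{\substack{0 \leq j < N \\ j \neq k}} \frac{1 - b_j}{b_k - b_j} = \prod_{\substack{0 \leq j < N \\ j \neq k}} \frac{1 + 2^j}{2^j - 2^k}.$$
I would then show that $a_k^{(N)}$ stabilises to a limit $a_k$ as $N \to \infty$ and that the resulting sequence satisfies (ii) and (iii).

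To handle the limit I split the product into the factors with $j > k$ and those with $j < k$. For $j > k$ each factor equals $(1 + 2^{-j})/(1 - 2^{k-j})$, which differs from $1$ by $O(2^{k-j} + 2^{-j})$; since $\sum_{j > k}(2^{k-j} + 2^{-j})$ is bounded by a universal constant independent of $k$, the tail infinite product converges absolutely to a value bounded by some $C_0$ independent of $k$ and $N$. The heart of the argument is the estimate for the $j < k$ factors: for each such $j$, $|2^j - 2^k| \geq 2^{k-1}$ and $1 + 2^j \leq 2 \cdot 2^j$, so
$$\left|\frac{1 + 2^j}{2^j - 2^k}\right| \leq 2^{j-k+2},$$
giving
$$|a_k^{(N)}| \leq C_0 \prod_{j=0}^{k-1} 2^{j - k + 2} = C_0 \cdot 2^{-k(k-3)/2}.$$
This super-exponential decay (uniform in $N$) immediately yields (ii): for every $n$,
$$\sum_k |a_k|\,|b_k|^n = \sum_k |a_k|\, 2^{kn} \leq C_0 \sum_k 2^{\,kn - k(k-3)/2},$$
whose exponent behaves like $-k^2/2$ for large $k$, so the series converges.

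Property (iii) then follows by passing to the limit in the truncated identities: for each fixed $n$ and every $N > n$, $\sum_{k=0}^{N-1} a_k^{(N)} b_k^n = 1$. Since $|a_k^{(N)}|\,|b_k|^n \leq C_0\, 2^{\,kn - k(k-3)/2}$ uniformly in $N$, dominated convergence on the counting measure together with the pointwise convergence $a_k^{(N)} \to a_k$ yields $\sum_{k=0}^\infty a_k b_k^n = 1$. The main obstacle is precisely the super-exponential estimate on $|a_k^{(N)}|$: one must check that the geometric growth of the denominators $|2^j - 2^k|$ for $j < k$ produces the quadratic-in-$k$ negative exponent needed both for condition (ii) at every $n$ and for a uniform dominating sequence in the limiting argument for (iii).
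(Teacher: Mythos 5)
Your proposal is correct, but note that the paper itself offers no proof of this lemma at all: it is stated with the citation to Seeley and used as a black box, so there is no ``paper's proof'' to compare against line by line. What you have written is, in essence, Seeley's own original construction from the cited 1964 paper: the choice $b_k=-2^k$ and the definition of $a_k$ as the limit of the Lagrange interpolation coefficients $\prod_{j\neq k}\frac{1-b_j}{b_k-b_j}$ of the truncated Vandermonde systems. All the key steps check out. The identification $a_k^{(N)}=L_k^{(N)}(1)$ is right, since $x\mapsto\sum_k L_k(x)b_k^n$ reproduces $x^n$ for $n\le N-1$ and so equals $1$ at $x=1$. The split of the product is sound: for $j>k$ each factor is $\frac{1+2^{-j}}{1-2^{k-j}}=1+O(2^{-j}+2^{k-j})$ with $1-2^{k-j}\ge\frac12$, and $\sum_{j>k}(2^{-j}+2^{k-j})\le 2$, so the tail product converges and is bounded uniformly in $k$ and $N$; for $j<k$ one has $2^k-2^j\ge 2^{k-1}$ and $1+2^j\le 2^{j+1}$, giving the factor bound $2^{j-k+2}$ and hence $|a_k^{(N)}|\le C_0\,2^{-k(k-3)/2}$, whose quadratically negative exponent dominates $2^{kn}$ for every fixed $n$. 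This yields (ii), and the uniform-in-$N$ domination legitimises the interchange of limit and sum that gives (iii); (i) and (iv) are immediate from $b_k=-2^k$. The only cosmetic remark is that you should state explicitly that the factors with $j<k$ do not depend on $N$, so convergence of $a_k^{(N)}$ reduces to convergence of the $j>k$ tail product, which you have already established. In short: a complete and correct proof of a statement the paper leaves unproved.
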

The proof to Seeley's extension theorem is by construction and it is informative to give explicitly the extension. First we need a to define two linear operators.
\begin{definition}
Given  $f \in \cA^+$, $\phi \in \cA$ and real $a$ we define two operators on $\cA^+$,
$$\bra{T_af}\bra{x}=f\bra{ax}$$
$$\bra{S_{\phi}f}\bra{x}=\phi\bra{x}f\bra{x}$$
\end{definition}
\begin{proof}[Proof of Seeley's Extension Theorem]
Let $\phi \in \kik{\R}$ such that
$$\phi\bra{x}=\begin{cases} 1 & x \in [0,1] \\ 0 & x \geq 2 \\ 0 & x \leq -1 \end{cases} $$
Then define  $\cE$ such that 
$$ \bra{\cE f}\bra{x} := \begin{cases} \sum\limits_{k=0}^\infty a_k \bra{T_{b_k}S_{\phi}f}\bra{x} &x <0\\
                                                f\bra{x} & x\geq 0
\end{cases} $$
\end{proof}

\begin{lemma}
If  $a>1$ then $\norm{T_a}_{\cA^+_n \rightarrow \cA^+_n}\leq a^n$
\end{lemma}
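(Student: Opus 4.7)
The plan is to reduce the claim to a direct change of variables in the integrals defining $\norm{\cdot}_{\cA^+_n}$, then control each summand by comparing $\sx$ to $\langle x/a \rangle$.

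First I would differentiate under the dilation: since $(T_af)(x)=f(ax)$, the chain rule gives $\frac{d^r}{dx^r}(T_af)(x)=a^r f^{(r)}(ax)$. Substituting this into the norm and making the change of variable $y=ax$ (with $dy=a\,dx$) turns the $r$-th summand of $\norm{T_af}_{\cA^+_n}$ into
\begin{equation*}
\int_0^\infty a^r|f^{(r)}(ax)|\,\langle x\rangle^{r-1}\,dx
\;=\;a^{r-1}\int_0^\infty|f^{(r)}(y)|\,\langle y/a\rangle^{r-1}\,dy.
\end{equation*}
Next I would compare $\langle y/a\rangle^{r-1}$ with $\langle y\rangle^{r-1}$. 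Because $a>1$, one has $\langle y/a\rangle\leq \langle y\rangle$, so the inequality $\langle y/a\rangle^{r-1}\leq \langle y\rangle^{r-1}$ holds cleanly for every $r\geq 1$; combined with $a^{r-1}\leq a^{n-1}$ this immediately bounds each $r\geq 1$ summand by $a^{n-1}\int_0^\infty|f^{(r)}|\sx^{r-1}dy$.

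The only subtle term is $r=0$, where the exponent $r-1=-1$ reverses the monotonicity of $\langle y/a\rangle^{r-1}$ in $a$. Here I would rewrite $a^{-1}\langle y/a\rangle^{-1}=a^{-1}(1+y^2/a^2)^{-1/2}=(a^2+y^2)^{-1/2}$, and then use $a>1$ to get $(a^2+y^2)^{-1/2}\leq (1+y^2)^{-1/2}=\sy^{-1}$. This controls the $r=0$ summand by $\int_0^\infty|f(y)|\sy^{-1}dy$, which is itself $\leq a^{n-1}\int_0^\infty|f(y)|\sy^{-1}dy$.

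Summing the $n+1$ estimates yields $\norm{T_af}_{\cA^+_n}\leq a^{n-1}\norm{f}_{\cA^+_n}\leq a^n\norm{f}_{\cA^+_n}$, which is the stated bound. The main (very mild) obstacle is the $r=0$ case, where the naive inequality $\langle y/a\rangle\leq \langle y\rangle$ points the wrong way and one must extract the factor of $a$ from $\langle y/a\rangle^{-1}$ before comparing; everything else is routine.
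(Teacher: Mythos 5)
Your proof is correct and follows essentially the same route as the paper's: differentiate under the dilation, change variables $y=ax$, and compare $\langle y/a\rangle$ with $\langle y\rangle$. You are in fact more careful than the paper, whose one-line computation starts the sum at $r=1$ and suppresses the $r=0$ term that you rightly single out as the one place where the naive monotonicity points the wrong way; your argument even yields the slightly sharper constant $a^{n-1}$.
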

\begin{proof}
Follows from 
\begin{eqnarray*}
\norm{T_af}_{\cA^+_n}&=&\sum\limits_{r=1}^n\int\limits_0^{\infty} |\frac{d^rf\bra{ax}}{dx^r}|\sx^{r-1}dx \leq 
\sum\limits_{r=1}^na^r\int\limits_0^{\infty} |\frac{d^rf\bra{x}}{dx^r}|\sx^{r-1}dx
\end{eqnarray*}
\end{proof}
\begin{lemma}If $\phi\in\cA$ then 
$S_{\phi}$ is a bounded operator with respect
 to each norm $\norm{\,}_{\cA^+_n}$
\end{lemma}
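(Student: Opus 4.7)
The plan is to use the Leibniz rule together with the decay estimates built into the definition of $\cA$ to dominate $\|S_\phi f\|_{\cA^+_n}$ by a constant multiple of $\|f\|_{\cA^+_n}$. Since $\phi \in \cA = \bigcup_{\beta<0} S^\beta$, fix $\beta < 0$ so that $\phi \in S^\beta$, and let $c_0, c_1, \ldots, c_n$ be the constants with $|\phi^{(k)}(x)| \leq c_k \langle x \rangle^{\beta - k}$ for $0 \leq k \leq n$.

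First I would apply Leibniz's rule to write
\begin{equation*}
\bigl(S_\phi f\bigr)^{(r)}(x) \;=\; \sum_{m=0}^r \binom{r}{m} \phi^{(r-m)}(x)\, f^{(m)}(x),
\end{equation*}
and then estimate
\begin{equation*}
\bigl|\bigl(S_\phi f\bigr)^{(r)}(x)\bigr|\, \langle x \rangle^{r-1}
\;\leq\; \sum_{m=0}^r \binom{r}{m}\, c_{r-m}\, \langle x \rangle^{\beta + m - 1}\, |f^{(m)}(x)|.
\end{equation*}
Since $\beta < 0$ and $\langle x \rangle \geq 1$, we have $\langle x \rangle^{\beta + m - 1} \leq \langle x \rangle^{m-1}$, so each term is dominated by a multiple of $|f^{(m)}(x)|\,\langle x \rangle^{m-1}$, which is precisely the integrand appearing in the $m$-th summand of $\|f\|_{\cA^+_n}$.

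Integrating over $[0,\infty)$ and summing over $r = 0, 1, \ldots, n$ then gives
\begin{equation*}
\|S_\phi f\|_{\cA^+_n} \;\leq\; \sum_{r=0}^n \sum_{m=0}^r \binom{r}{m} c_{r-m} \int_0^\infty |f^{(m)}(x)|\,\langle x \rangle^{m-1}\, dx
\;\leq\; C_{\phi,n}\, \|f\|_{\cA^+_n},
\end{equation*}
where $C_{\phi,n}$ collects the binomial coefficients and the constants $c_k$. This is the required boundedness. There is no real obstacle here; the argument is a routine Leibniz-plus-decay estimate, and the only point requiring any care is ensuring the decay of $\phi$ absorbs the extra factor $\langle x \rangle^{r-m}$ introduced by differentiation, which is exactly the content of $\phi \in S^\beta$ with $\beta < 0$.
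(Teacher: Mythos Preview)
Your proof is correct and is essentially identical to the paper's own argument: both apply the Leibniz rule, use the $S^\beta$ bound $|\phi^{(r-m)}(x)|\le c_{r-m}\sx^{\beta-(r-m)}$ to absorb the weight $\sx^{r-1}$ down to $\sx^{m-1}$, and then sum and integrate. The only cosmetic difference is that you keep track of the binomial coefficients explicitly and make the step $\sx^{\beta}\le 1$ (using $\beta<0$) explicit, whereas the paper bundles everything into generic constants $c_{r,\phi}$.
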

\begin{proof}
A simple application of Leibnitz gives
$$ \frac{d^r\bra{\phi\bra{x}f\bra{x}}}{dx^r}=\sum\limits_{m=0}^rc_r\frac{d^{r-m}\bra{\phi\bra{x}}}{dx^{r-m}}\,
\frac{d^m\bra{f\bra{x}}}{dx^m}$$ then
\begin{eqnarray*}
|\frac{d^r\bra{\phi\bra{x}f\bra{x}}}{dx^r}|&\leq &c_r\sum\limits_{m=0}^rd_{r-m,\phi}\,\sx^{\beta-\bra{r-m}}
\frac{d^m\bra{f\bra{x}}}{dx^m}\\
&\leq & c_{r,\phi}\sum\limits_{m=0}^r\,\sx^{m-r}\frac{d^m\bra{f\bra{x}}}{dx^m}
\end{eqnarray*}
we integrate to give
\begin{eqnarray*}
\int\limits_{0}^{\infty}
|\frac{d^r\bra{\phi\bra{x}f\bra{x}}}{dx^r}|\sx^{r-1}dx & \leq &
c_{r,\phi}\sum\limits_{m=0}^r\int\limits_{0}^{\infty}|\frac{d^m\bra{f\bra{x}}}{dx^m}|\sx^{m-1}dx\\
& = & c_{r,\phi}\norm{f}_{\cA^+_r}
\end{eqnarray*}
and hence we have our estimate
\begin{eqnarray*}
\norm{S_{\phi}f}_n & = & \sum\limits_{r=0}^n\int\limits_{0}^{\infty}
|\frac{d\bra{\phi\bra{x}f\bra{x}}^r}{d^rx}|\sx^{r-1}dx\\
& \leq & c_{n,\phi}\sum\limits_{r=0}^n \norm{f}_{\cA^+_r} \\
&\leq &c_{n,\phi}\norm{f}_{\cA^+_n}
\end{eqnarray*}
\end{proof}
\begin{theorem}
Seeley's Extension Operator is a bounded operator on each of the normed vector spaces $\cA_n^+$
\end{theorem}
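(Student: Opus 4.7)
The plan is to reduce the claim to the two previously proved boundedness results, $\norm{T_a}_{\cA_n^+\to\cA_n^+}\leq a^n$ and the $\cA_n^+$-boundedness of $S_\phi$, by splitting the target norm into its positive and negative half-line pieces. On $[0,\infty)$, the definition of $\cE$ gives $\cE f = f$, so that portion contributes exactly $\norm{f}_{\cA_n^+}$ to $\norm{\cE f}_n$. All of the work lies in estimating the integral over $(-\infty,0)$, where $\cE f(x)=\sum_{k=0}^\infty a_k (T_{b_k}S_\phi f)(x)$.

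Setting $g:=S_\phi f$, I would first note that $g$ has compact support in $[0,2]$ since $\phi$ does. Using Seeley's Lemma (ii), the series for $\cE f$ and all its formal termwise derivatives converge absolutely and uniformly on compact subsets of $(-\infty,0)$, which justifies differentiating termwise and interchanging sum and integral:
\begin{equation*}
\int_{-\infty}^0 \bigl|(\cE f)^{(r)}(x)\bigr|\,\sx^{r-1}\,dx
\;\leq\; \sum_{k=0}^\infty |a_k|\,|b_k|^r \int_{-\infty}^0 \bigl|g^{(r)}(b_k x)\bigr|\,\sx^{r-1}\,dx.
\end{equation*}
Next I would apply the change of variables $y=b_k x$; since $b_k<0$, this maps $(-\infty,0)$ onto $(0,\infty)$ with Jacobian $|b_k|^{-1}$, and converts the weight $\sx^{r-1}$ into $\langle y/b_k\rangle^{r-1}$.

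For $r\geq 1$ and those $k$ with $|b_k|\geq 1$ (all but finitely many, since $b_k\to-\infty$), the elementary inequality $\langle y/b_k\rangle \leq \sy$ gives $\langle y/b_k\rangle^{r-1}\leq \sy^{r-1}$, so each integral is bounded by $|b_k|^{-1}\norm{g}_{\cA_n^+}$; the finitely many exceptional $k$ contribute a finite constant. Summing yields a bound of the form
\begin{equation*}
\sum_{k=0}^\infty |a_k|\,|b_k|^{r-1}\,\norm{g}_{\cA_n^+},
\end{equation*}
which is finite by Seeley's Lemma (ii). For $r=0$ the weight $\langle y/b_k\rangle^{-1}$ is merely $\leq 1$, so it fails to dominate $\sy^{-1}$; here I would instead exploit that $g$ is supported in $[0,2]$, on which $\sy$ is uniformly comparable to $1$, to bound the integral of $|g|$ by a constant multiple of $\norm{g}_{\cA_n^+}$, and then use the eventual boundedness of $|b_k|^{-1}$ together with $\sum_k|a_k|<\infty$ (again from Seeley's Lemma (ii) with $n=0$).

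Combining the two halves and invoking $\norm{g}_{\cA_n^+}=\norm{S_\phi f}_{\cA_n^+}\leq c_{n,\phi}\norm{f}_{\cA_n^+}$ gives the desired $\norm{\cE f}_n\leq C_n\norm{f}_{\cA_n^+}$. The main obstacle I anticipate is the $r=0$ term, where the weight does not control itself under the dilation and one must replace the $\cA_n^+$-operator bound on $T_{b_k}$ (which was stated only for $a>1$) by a direct estimate that uses the compact support inherited from $S_\phi$; the remaining $r\geq 1$ terms amount to a bookkeeping exercise in Seeley's summability condition.
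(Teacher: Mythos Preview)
Your argument is correct and follows the same overall strategy as the paper: split $\norm{\cE f}_n$ into the contribution from $[0,\infty)$, which is exactly $\norm{f}_{\cA_n^+}$, and the contribution from $(-\infty,0)$, which is controlled using the summability condition $\sum_k|a_k|\,|b_k|^n<\infty$ from Seeley's lemma together with the boundedness of $S_\phi$.

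The only organizational difference is that the paper substitutes $y=-x$ rather than $y=b_kx$, thereby rewriting the negative half-line contribution as $\bigl\|\sum_k a_k\,T_{-b_k}S_\phi f\bigr\|_{\cA_n^+}$ and then invoking the already-proved bound $\norm{T_{-b_k}}_{\cA_n^+\to\cA_n^+}\leq |b_k|^n$ directly. This handles all $r$ (including $r=0$) at once, because the inequality $\langle y/a\rangle^{-1}\leq a\,\langle y\rangle^{-1}$ for $a\geq 1$ is built into that lemma. Your separate treatment of $r=0$ via the compact support of $S_\phi f$ is perfectly valid but unnecessary: had you used $\langle y/b_k\rangle^{-1}\leq |b_k|\,\langle y\rangle^{-1}$ you would have obtained a factor $|b_k|^{0}$ and could sum against $\sum_k|a_k|$ exactly as for $r\geq 1$. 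Either way the conclusion is the same.
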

\begin{proof}
\begin{eqnarray*}
\norm{\cE f}_{\cA_n} &=& \sum\limits_{r=0}^n \int\limits_{-\infty}^{\infty}|\frac{d^r\bra{\cE f}}{dx^r}|\sx^{r-1}dx\\
&=&\sum\limits_{r=0}^n \int\limits_0^{\infty}|\frac{d^rf\bra{x}}{dx^r}|\sx^{r-1}dx +
\sum\limits_{r=0}^n \int\limits_{-\infty}^{0}|\sum\limits_0^{\infty}a_k\frac{d^r\bra{\phi\bra{b_kx}f\bra{b_kx}}}{dx^r}|\sx^{r-1}dx
\\ &=&\norm{f}_{\cA^+_n} + \norm{\sum\limits_{k=0}^{\infty}a_k T_{-b_k}S_{\phi}f}_{\cA^+_n}\\
& \leq & \norm{f}_{\cA^+_n} +\sum\limits_{k=0}^{\infty}|a_k|\,\norm{S_{\phi}}\,\norm{|T_{-b_k}}\norm{f}_{\cA^+_n}\\
& \leq &\norm{f}_{\cA^+_n} +\bra{\sum\limits_{k=0}^{\infty}|a_k|\,|b_k|^n}\,c_{n,\phi}\norm{f}_{\cA^+_n}
\end{eqnarray*}
and hence the extension operator is continuous.
\end{proof}
If $f$ and $g$ are elements of $\cA$ such that $f|_{[0,\infty]}=g|_{[0,\infty]}$ and the spectrum of $H$ is $[0,\infty)$ then it is not
necessary that $supp\bra{f-g}\cap\sigma\bra{H}$ is empty, since $supp\bra{f-g}\cap\sigma\bra{H}=\{0\}$
is possible. 
\begin{lemma}
If $f$ is a smooth function on $\R$ of compact support such that
$$supp\bra{f}=[-a,0]$$ and $H$ is an operator satisfying our modified
hypothesis with $\sigma\bra{H}\subseteq [0,\infty]$
then $$f\bra{H}=0$$
\end{lemma}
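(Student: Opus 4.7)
The situation is that $\text{supp}(f) = [-a,0]$ touches $\sigma(H) \subseteq [0,\infty)$ only at the single point $0$, so Lemma \ref{e:important}(iii) does not apply directly. The natural idea is to approximate $f$ by a sequence of smooth functions whose supports sit strictly to the left of the spectrum, apply (iii) to each, and pass to the limit via continuity of the functional calculus.

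My plan is as follows. First I would observe that because $f$ is smooth on $\R$ with $f \equiv 0$ on $[0,\infty)$, all one-sided derivatives of $f$ at $0$ vanish, so $f^{(r)}(0) = 0$ for every $r \geq 0$. This allows translation to preserve smoothness at the endpoint. Then for each $\epsilon > 0$ I define
$$f_\epsilon(x) := f(x+\epsilon).$$
Each $f_\epsilon$ is smooth with $\text{supp}(f_\epsilon) \subseteq [-a-\epsilon,-\epsilon]$, which is disjoint from $\sigma(H) \subseteq [0,\infty)$. Therefore Lemma \ref{e:important}(iii) gives $f_\epsilon(H) = 0$ for every $\epsilon > 0$.

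Next I would show that $f_\epsilon \to f$ in the norm $\norm{\cdot}_{n+1}$ as $\epsilon \to 0^+$. Since $f$ and all its derivatives are continuous with compact support inside some fixed bounded interval (say $[-a-1,0]$ for $\epsilon \leq 1$), each $f^{(r)}$ is uniformly continuous, hence $f_\epsilon^{(r)} = f^{(r)}(\,\cdot + \epsilon) \to f^{(r)}$ uniformly on $\R$. Because the supports of $f - f_\epsilon$ are all contained in a common bounded set and the weights $\sx^{r-1}$ are bounded there, the integrals
$$\int_{-\infty}^{\infty} |(f - f_\epsilon)^{(r)}(x)|\,\sx^{r-1}\,dx$$
tend to $0$ for $r = 0, 1, \dots, n+1$, so $\norm{f - f_\epsilon}_{n+1} \to 0$.

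Finally I would invoke the norm estimate $\norm{g(H)} \leq c\,\norm{g}_{n+1}$ from Lemma \ref{e:important}(i) applied to $g = f - f_\epsilon$, which gives $\norm{f(H) - f_\epsilon(H)} \to 0$. Combined with $f_\epsilon(H) = 0$ for all $\epsilon > 0$, this yields $f(H) = 0$. The main thing to be careful about is the uniform-in-$\epsilon$ control of the supports and the verification that the translated functions genuinely remain in $\cA$ (which is immediate once we note that the flatness of $f$ at $0$ makes the translates smooth), so the real work is just the elementary norm-convergence estimate above.
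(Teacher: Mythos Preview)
Your proof is correct and follows essentially the same route as the paper: translate by $\epsilon$ to obtain $f_\epsilon(x)=f(x+\epsilon)$ with support disjoint from $\sigma(H)$, apply Lemma~\ref{e:important}(iii) to get $f_\epsilon(H)=0$, and then use the continuity estimate $\norm{g(H)}\leq c\norm{g}_{n+1}$ together with $\norm{f-f_\epsilon}_{n+1}\to 0$ (the paper bounds the latter via the mean value inequality $\norm{f^{(r)}-f_\epsilon^{(r)}}_\infty\leq c_r\epsilon$, you via uniform continuity, both over the fixed support $[-a-1,0]$). One small remark: your flatness observation $f^{(r)}(0)=0$ is true but unnecessary, since $f$ is smooth on all of $\R$ and translation of a globally smooth function is automatically smooth.
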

\begin{proof}
Let $\epsilon \in \bra{0,1}$ and define $$f_{\epsilon}\bra{x}:=f\bra{x+\epsilon}$$
so that $supp\bra{f_{\epsilon}}=[-\bra{a+\epsilon},-\epsilon]$.\\
By lemma \ref{e:important}(iii) $ f_{\epsilon}\bra{H}=0$.
For all $n$ there are constants $c_n\geq 0$ such that 
$$\norm{\frac{d^nf}{dx^n}-\frac{d^nf_{\epsilon}}{dx^n}}_{\infty}\leq c_n\epsilon$$
then 
\begin{eqnarray*}
\norm{f\bra{H}}&=&\norm{f\bra{H}-f_{\epsilon}\bra{H}}\\
&\leq & \sum\limits_{r=0}^n \int\limits_{-\bra{a+1}}^0
|\frac{d^rf\bra{x}}{dx^r}-\frac{d^rf_{\epsilon}\bra{x}}{dx^r}|\sx^{r-1}dx\\
&\leq &\sum\limits_{r=0}^n \epsilon c_r \int\limits_{-\bra{a+1}}^0 \sx^{r-1}dx\\
&=& \epsilon k_{n,f}
\end{eqnarray*}
hence our result.
\end{proof}
\begin{corollary}\label{last}
If $f$ and $g$ are in $\cA$ such that $f|_{[0,\infty]}=g|_{[0,\infty]}$
and $\sigma\bra{H}\subseteq [0,\infty]$ then $f\bra{H}-g\bra{H}=0$
\end{corollary}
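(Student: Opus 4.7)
The plan is to reduce to the preceding lemma by truncating $h := f - g$ to compact support and controlling the error via the continuity bound $\norm{\phi(H)} \leq c\norm{\phi}_{n+1}$ from Lemma \ref{e:important}(i). Since $f$ and $g$ agree on $[0,\infty)$, the difference $h$ lies in $\cA$ with $\supp(h) \subseteq (-\infty, 0]$, so it suffices to show $h(H) = 0$.

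I would fix a smooth bump $\chi \in C^\infty(\R)$ with $\chi = 1$ on $[-1/2, 1/2]$ and $\chi = 0$ outside $[-1, 1]$, and set $\chi_N(x) := \chi(x/N)$. The rescaling makes $\chi_N$ lie in $S^0$ with constants \emph{uniform} in $N$: the derivatives $\chi_N^{(k)}(x) = N^{-k} \chi^{(k)}(x/N)$ are supported in $\{N/2 \leq |x| \leq N\}$, where $\sx \sim N$, giving $|\chi_N^{(k)}(x)| \leq c_k \sx^{-k}$ with $c_k$ independent of $N$. By Lemma \ref{lem:stable}, $h_N := h\chi_N$ therefore lies in $\cA$ with $|h_N^{(r)}(x)| \leq c_r \sx^{\beta - r}$ uniformly in $N$, where $h \in S^\beta$ for some $\beta < 0$; moreover $\supp(h_N) \subseteq [-N, 0]$ is compact, so the preceding lemma yields $h_N(H) = 0$ for every $N$.

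Next I would estimate $\norm{h - h_N}_{n+1}$. The product $h(1-\chi_N)$ is supported in $(-\infty, -N/2]$, and the same Leibniz argument yields $|[h(1-\chi_N)]^{(r)}(x)| \leq c_r \sx^{\beta - r}$ uniformly in $N$. Hence
\[
\norm{h - h_N}_{n+1} \leq c \sum_{r=0}^{n+1} \int_{-\infty}^{-N/2} \sx^{\beta - 1}\, dx \longrightarrow 0 \quad \text{as } N \to \infty,
\]
because $\sx^{\beta - 1}$ is integrable on $\R$. Combined with Lemma \ref{e:important}(i) this gives $\norm{h(H) - h_N(H)} \to 0$, and since $h_N(H) = 0$ for every $N$, I conclude $h(H) = 0$, i.e., $f(H) = g(H)$.

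The main obstacle is arranging the cutoff to belong to $S^0$ with \emph{uniform} constants; a merely translated fixed bump would not suffice, since its $k$-th derivatives live near $-N$ with no compensating $N^{-k}$ factor, and the resulting $S^0$-seminorms would blow up in $N$. The rescaling $\chi_N(x) = \chi(x/N)$ repairs this, and from there the result is a direct application of the algebra-level product estimate in Lemma \ref{lem:stable} together with the norm continuity of the Helffer--Sj\"ostrand functional calculus.
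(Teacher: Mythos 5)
Your proof is correct. The paper itself offers no argument for this corollary --- it is presented as an immediate consequence of the preceding lemma --- but as you rightly sensed, the reduction is not quite immediate: $h=f-g$ is supported in $(-\infty,0]$ but need not have compact support, so the lemma (which treats $\supp f=[-a,0]$) does not apply directly, and the paper's general density of $C^\infty_0\bra{\R}$ in $\cA$ would not by itself preserve the support condition. Your rescaled cutoff $\chi_N\bra{x}=\chi\bra{x/N}$ is exactly the right device: the observation that $\chi_N^{(k)}\bra{x}=N^{-k}\chi^{(k)}\bra{x/N}$ lives where $\sx\sim N$, so that $\chi_N$ has $S^0$ seminorms uniform in $N$, is the one nontrivial technical point, and it feeds correctly into Lemma \ref{lem:stable} to give $|\bra{h\bra{1-\chi_N}}^{(r)}|\leq c_r\sx^{\beta-r}$ with constants independent of $N$; the tail integral of $\sx^{\beta-1}$ over $(-\infty,-N/2]$ then vanishes as $N\to\infty$ since $\beta-1<-1$, and Lemma \ref{e:important}(i) converts $\norm{h-h_N}_{n+1}\to 0$ into $h\bra{H}=\lim h_N\bra{H}=0$. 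In short, you have supplied a complete proof of a step the paper leaves implicit; the only cosmetic remark is that the preceding lemma is stated for $\supp f=[-a,0]$ exactly, whereas you need it for $\supp h_N\subseteq[-N,0]$, but its proof works verbatim for any compact subset of $(-\infty,0]$.
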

\begin{theorem}
If $H$ satisfies our modified hypothesis with spectrum $\sigma\bra{H} \subseteq [0,\infty)$
then there is a functional calculus $\gamma_H :\cA^+\rightarrow {\cL}\bra{\cB}$ 
and moreover for all $f\in \cA^+\cap\cA$ $$\gamma_H\bra{f}=-\frac{1}{\pi}\cint \frac{\partial \tilde{f}}{\partial 
\overline{z}}
\bra{z-H}^{-1}dxdy$$
\end{theorem}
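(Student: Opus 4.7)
The plan is to build the functional calculus by composing Seeley's extension operator with the existing functional calculus on $\cA$; i.e., I define
\[
\gamma_H(f) := (\cE f)(H) \quad \text{for } f \in \cA^+,
\]
and then verify that this definition has the required properties. The main conceptual point is that any ambiguity introduced by the non-uniqueness of smooth extensions from $[0,\infty)$ to $\R$ is killed by Corollary \ref{last}, because the spectrum of $H$ lies entirely in $[0,\infty)$.

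First I would check that $\gamma_H$ is well-defined independently of the choice of extension: if $\cE'$ were any other linear extension operator mapping $\cA^+$ into $\cA$, then $\cE f - \cE' f$ vanishes on $[0,\infty)$, so Corollary \ref{last} gives $(\cE f)(H) = (\cE' f)(H)$. Continuity of $\gamma_H$ from $\cA^+_n$ to $\cL(\cB)$ then follows immediately by composing the boundedness estimate for $\cE$ from the preceding theorem with the bound $\norm{g(H)} \le c\norm{g}_{n+1}$ from lemma \ref{e:important}(i).

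Next I would verify the identity claimed for $f \in \cA^+ \cap \cA$. Since both $\cE f$ and $f$ lie in $\cA$ and agree on $[0,\infty) \supseteq \sigma(H)$, Corollary \ref{last} yields $(\cE f)(H) = f(H)$, and the latter is given by the Helffer-Sj\"ostrand integral from lemma \ref{e:important}. This is precisely the required formula.

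To finish I would record the basic algebraic properties inherited via $\cE$. Linearity is automatic because $\cE$ is linear. For the multiplicative property, note that $\cE(fg)$ and $(\cE f)(\cE g)$ are both elements of $\cA$ that restrict to $fg$ on $[0,\infty)$; applying Corollary \ref{last} once more and then lemma \ref{e:important}(iv) gives
\[
\gamma_H(fg) = (\cE(fg))(H) = \bigl((\cE f)(\cE g)\bigr)(H) = (\cE f)(H)\,(\cE g)(H) = \gamma_H(f)\gamma_H(g).
\]
The step I expect to be mildly delicate is the well-definedness argument, because one must be sure that Corollary \ref{last} applies to differences of the form $\cE f - \cE' f$, which requires these differences to lie in $\cA$ (not merely in a larger space of smooth functions); this is guaranteed by the assumption that $\cE$ maps $\cA^+$ boundedly into $\cA$, combined with the same property for any alternative extension. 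Everything else is a routine transfer of the $\cA$-level calculus through $\cE$.
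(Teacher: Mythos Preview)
Your proposal is correct and follows essentially the same route as the paper: define $\gamma_H(f^+)$ via the Helffer--Sj\"ostrand calculus applied to a Seeley extension, invoke Corollary~\ref{last} for well-definedness, and inherit the algebraic and continuity properties from the calculus on $\cA$. The paper's own proof is in fact much terser than yours, stating only the definition and the appeal to Corollary~\ref{last}; your explicit verification of multiplicativity and of the displayed formula for $f\in\cA^+\cap\cA$ fills in details the paper leaves implicit.
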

\begin{proof}
Let $f^+ \in \cA^+$, then by Seeley's Extension Theorem there exists an extension $f\in\cA$.
We define $\gamma_H\bra{f^+}:=f\bra{H}$. This definition is independent of the particular 
extension by corollary \ref{last}. The functional analytic properties are inherited from the extension.
\end{proof}  
\begin{theorem}[Refinement of Theorem 10 of \cite{Da1}]
Let $n\geq1$ be an integer and $t>0$.\\If we denote the operator $\gamma_H\bra{e^{-s^nt}}$ by $e^{-H^nt}$ then
$$e^{-H^n\bra{t_1+t_2}} = e^{-H^n{t_1}} e^{-H^n{t_2}} $$ for all $n\geq1$ and $0<t\leq 1$

\end{theorem}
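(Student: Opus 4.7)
The plan is to reduce the semigroup identity to the pointwise identity $e^{-s^n(t_1+t_2)} = e^{-s^n t_1}\cdot e^{-s^n t_2}$ in $\cA^+$ by showing that $\gamma_H$ is an algebra homomorphism. First I would verify that for each $t>0$ and integer $n\geq 1$ the function $s\mapsto e^{-s^n t}$ belongs to $\cA^+$. It is smooth on $[0,\infty)$, and an induction shows its $k$-th derivative has the form $P_{k,n,t}(s)\,e^{-s^n t}$ for some polynomial $P_{k,n,t}$. Since $e^{-s^n t}$ decays faster than any polynomial as $s\to\infty$, each such derivative satisfies an estimate $|f^{(k)}(s)|\leq c_{k,n,t}\sx^{\beta-k}$ for every $\beta<0$, so $e^{-s^n t}\in S^\beta_+\subset\cA^+$.

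Next I would establish that $\gamma_H$ is multiplicative. Given $f^+,g^+\in\cA^+$ with Seeley extensions $f,g\in\cA$, the pointwise product $fg$ lies in $\cA$ and agrees with $f^+ g^+$ on $[0,\infty)$, so $fg$ is itself a valid extension of $f^+g^+$. By Corollary \ref{last}, the definition of $\gamma_H(f^+g^+)$ does not depend on which extension we use, so we may evaluate it via $fg$. Applying lemma \ref{e:important}(iv) gives
\begin{equation*}
\gamma_H(f^+g^+) \;=\; (fg)(H) \;=\; f(H)\,g(H) \;=\; \gamma_H(f^+)\,\gamma_H(g^+).
\end{equation*}

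Finally, the identity $e^{-s^n(t_1+t_2)} = e^{-s^n t_1}\cdot e^{-s^n t_2}$ holds pointwise on $[0,\infty)$, hence as an equality in $\cA^+$. Applying $\gamma_H$ and using the multiplicativity just established yields
\begin{equation*}
e^{-H^n(t_1+t_2)} \;=\; \gamma_H\bra{e^{-s^n t_1}\cdot e^{-s^n t_2}} \;=\; \gamma_H\bra{e^{-s^n t_1}}\,\gamma_H\bra{e^{-s^n t_2}} \;=\; e^{-H^n t_1}\,e^{-H^n t_2}.
\end{equation*}

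There is no deep obstacle here: all the heavy lifting has been done by lemma \ref{e:important}(iv) and Corollary \ref{last}, which together ensure that $\gamma_H$ descends to a genuine algebra homomorphism on $\cA^+$. The only technical point to be careful about is the first step, the verification that $e^{-s^n t}$ lies in $\cA^+$ uniformly in $t$ over the parameter range of interest, and this is immediate from super-polynomial decay of the exponential. The remaining work is simply to record that the pointwise multiplicative identity for the exponential transfers through the functional calculus.
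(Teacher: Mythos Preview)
Your argument is correct and is exactly the reasoning the paper intends. In fact the paper gives no separate proof of this theorem at all: it is stated immediately after the theorem constructing $\gamma_H$, whose proof ends with the sentence ``The functional analytic properties are inherited from the extension.'' That sentence is meant to encode precisely the multiplicativity argument you spelled out (choose extensions $f,g\in\cA$ of $f^+,g^+$, note $fg$ extends $f^+g^+$, invoke Corollary~\ref{last} and Lemma~\ref{e:important}(iv)), together with the routine check that $e^{-s^nt}\in\cA^+$. So your proposal is not a different route; it is simply the omitted details of the paper's implicit one-line proof.
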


\begin{center}
 {\Large{\bf{ Acknowledgements}}}
 \end{center}
This research was funded by an EPSRC Ph.D grant 95-98 at Kings College, London. I am very grateful to E. Brian Davies
for giving me this problem, his encouragement since and for continuing to be a mentor in Mathematics long after 
having finished supervising my Ph.D. I am indebted to Anita for all her support.

\vskip 0.3in
Narinder Claire \newline
Global Equities \& Commodity Derivatives Quantitative Research \newline
BNP Paribas London \newline
10 Harewood Avenue\newline
London\newline
NW1 6AA \newline
e-mail: narinder.claire@uk.bnpparibas.com
\end{document}